\newtheorem{theorem}{Theorem}[section]
\newtheorem{lemma}[theorem]{Lemma}
\newtheorem{prop}[theorem]{Proposition}
\theoremstyle{definition}
\newtheorem{remark}[theorem]{Remark}
\newcommand{\Nbd}{\operatorname{Nbd}}
\newcommand{\cl}{\operatorname{cl}}
\numberwithin{equation}{section}
\begin{document}

\title[Genus two Goeritz groups of lens spaces]{Genus two Goeritz groups of lens spaces}

\author{Sangbum Cho}\thanks{This work is supported by Basic Science Research Program through the National Research Foundation of Korea(NRF) funded by the Ministry of Education, Science and Technology (2012006520).}
\address{Department of Mathematics Education, Hanyang University, Seoul 133-791,
Korea}
\email{scho@hanyang.ac.kr}

\subjclass[2000]{Primary 57N10, 57M60.}

\date{\today}

\begin{abstract}
Given a genus-$g$ Heegaard splitting of a $3$-manifold, the Goeritz group is defined to be the group of isotopy classes of orientation-preserving homeomorphisms of the manifold that preserve the splitting.
In this work, we show that the Goeritz groups of genus-$2$ Heegaard splittings for lens spaces $L(p, 1)$ are finitely presented, and give explicit presentations of them.
\end{abstract}

\maketitle

\section{Introduction}
\label{sec:intro}

It is well known that every closed orientable $3$-manifold $M$ can be decomposed into two handlebodies of the same genus.
This is what we call a Heegaard splitting of the manifold, and the genus of the handlebodies is called the genus of the splitting.
Given a genus-$g$ Heegaard splitting of $M$, the {\it Goeritz group} of the splitting, which we will denote by  $\mathcal G_g$, is the group of isotopy classes of orientation-preserving homeomorphisms of $M$ that preserve each of the handlebodies of the splitting setwise.
In particular, this group is interesting when the manifold is the $3$-sphere or a lens space since it is well known from \cite{W}, \cite{B} and \cite{B-O}, that they have unique Heegaard splittings for each genus up to isotopy.
In this case, each Goeritz group depends only on the genus of the splitting, and so we can define the {\it genus-$g$ Goeritz group} $\mathcal G_g$ of each of those manifolds without mentioning a specific Heegaard splitting.
For the $3$-sphere, it was shown in \cite{Ge} and \cite{Sc} that $\mathcal G_2$ is finitely generated, and subsequently in \cite{Ak} and \cite{C} that $\mathcal G_2$ is finitely presented and its finite presentation was introduced.
Further, in \cite{Kod}, a natural generalization of a Goeritz group is studied, the group of isotopy classes of orientation-preserving homeomorphisms of the $3$-sphere preserving an embedded genus two handlebody which is possibly knotted.

In this work, we show that the Goeritz group $\mathcal G_2$ of each of the lens spaces $L(p, 1)$ is finitely presented.
In the main theorem, Theorem \ref{thm:presentation}, their explicit presentations are given.
For the genus-$2$ Goeritz groups of the other lens spaces and for the higher genus Georitz groups of the $3$-sphere and lens spaces, it is conjectured that they are all finitely presented but it is still known to be an open problem.

We generalize the method developed in \cite{C}.
We find a tree on which $\mathcal G_2$ for $L(p, 1)$ acts such that the quotient of the tree by the action of $\mathcal G_2$ is a single edge, and then apply the well known theory of groups acting on trees due to Bass and Serre \cite{S}.
Such a tree will be found in the barycentric subdivision of the disk complex for one of the handlebodies of the splitting.
For arbitrary lens spaces $L(p, q)$, finding such trees, if exist, is much more complicated problem than the case of $L(p, 1)$, which will be fully discussed in \cite{C-K}.

Throughout the paper, we simply denote by $\mathcal G$ the genus-$2$ Goeritz group $\mathcal G_2$ of a lens space.
We use the standard notation $L(p, q)$, $p \geq 2$, for a lens space with its basic properties found in standard textbooks. For example, we refer \cite{R}.
For a genus-$1$ Heegaard splitting of $L(p, 1)$, any oriented meridian circle of a solid torus of the splitting is identified with a $(p, 1)$-curve (or $(p, p-1)$-curve) on the boundary of the other solid torus, after a suitable choice of oriented longitude and meridian of the solid torus.
The triple $(V, W; \Sigma)$ will denote a genus-$2$ Heegaard splitting of a lens space $L = L(p, q)$.
That is, $L = V \cup W$ and $ V \cap W = \partial V =\partial W = \Sigma$, where $V$ and $W$ are handlebodies of genus two.

The disks $D$ and $E$ in a handlebody are always assumed to be properly embedded and their intersection is transverse and minimal up to isotopy.
In particular, if $D$ intersects $E$, then $D \cap E$ is a collection of pairwise disjoint arcs that are properly embedded in both $D$ and $E$.
Finally, $\Nbd(X)$ will denote a regular neighborhood of $X$ and $\cl(X)$ the closure of $X$ for a subspace $X$ of a polyhedral space, where the ambient space will always be clear from the context.

\section{Primitive elements of the free group of rank two}
\label{sec:free_group}
The fundamental group of the genus-$2$ handlebody is the free group $\mathbb Z \ast \mathbb Z$ of rank two.
We call an element of $\mathbb Z \ast \mathbb Z$ {\it primitive} if it is a member of a generating pair of $\mathbb Z \ast \mathbb Z$.
Primitive elements of $\mathbb Z \ast \mathbb Z$ have been well understood.
For example we refer \cite{O-Z} to the reader.
A key property of the primitive elements of the free group of rank two is the following, which is a direct consequence of Corollary 3.3 in \cite{O-Z}.

\begin{prop}
Fix a generating pair $\{x, y\}$ of $\mathbb Z \ast \mathbb Z$, and let $w$ be a primitive element of $\mathbb Z \ast \mathbb Z$. Then for some $\epsilon \in \{1,-1\}$ and some $n \in \mathbb Z$, some cyclically reduced form of $w$ is a product of terms each of the form $x^\epsilon y^n$ or $x^\epsilon y^{n+1}$, or else a product of terms each of the form $y^\epsilon x^n$ or $y^\epsilon x^{n+1}$.
\par
\label{prop:generator}
\end{prop}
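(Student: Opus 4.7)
The plan is to extract the desired normal form directly from Corollary 3.3 of \cite{O-Z}, which is invoked for exactly this purpose. That corollary gives a combinatorial classification of primitive elements of $\mathbb Z \ast \mathbb Z$ with respect to a fixed basis $\{x,y\}$: a cyclically reduced primitive word has a very restricted syllable structure, determined (up to the $x \leftrightarrow y$ symmetry) by a single sign $\epsilon$ and a single integer $n$.

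First, I would replace $w$ by a cyclically reduced representative $w_0$ in its conjugacy class; any cyclically reduced form of $w$ is obtained by cyclic permutation from $w_0$, so producing the claimed block decomposition for $w_0$ produces it for $w$. Second, I would apply Corollary 3.3 of \cite{O-Z} to $w_0$. In the formulation needed here, the corollary asserts that one of two alternatives holds: either every maximal $x$-syllable of $w_0$ is equal to $x^\epsilon$ for a single fixed $\epsilon \in \{1,-1\}$ and every maximal $y$-syllable is either $y^n$ or $y^{n+1}$ for a fixed $n \in \mathbb Z$, or else the symmetric statement with the roles of $x$ and $y$ exchanged. Third, I would group the letters of $w_0$ by pairing each $x^\epsilon$ syllable with the $y$-syllable immediately following it in the cyclic word; this rewrites $w_0$ as a concatenation of blocks of the form $x^\epsilon y^n$ and $x^\epsilon y^{n+1}$, and similarly in the symmetric case. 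This is precisely the conclusion of the proposition.

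The main obstacle, and indeed the only real content of the argument, is the translation between the normal form produced in \cite{O-Z} — typically phrased either in terms of a reduction procedure or via a continued-fraction expansion that records how $w$ is obtained from a basis element by a sequence of Nielsen transformations — and the block decomposition stated here. Once this correspondence is made, no further group-theoretic input is required: the regrouping of consecutive syllables into blocks $x^\epsilon y^n$ or $x^\epsilon y^{n+1}$ is purely notational, and the primitivity of $w$ enters only through the cited corollary.
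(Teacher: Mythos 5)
Your proposal is correct and matches the paper exactly: the paper offers no argument of its own, stating only that the proposition is ``a direct consequence of Corollary 3.3 in \cite{O-Z},'' which is precisely the citation you invoke. Your added steps --- passing to a cyclically reduced representative, reading off the syllable structure from the corollary, and regrouping syllables into blocks $x^\epsilon y^n$ and $x^\epsilon y^{n+1}$ --- correctly fill in the routine translation the paper leaves implicit.
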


From the proposition, the cyclically reduced forms of a primitive element are very restrictive.
For example, if $w$ is a primitive element of $\mathbb Z \ast \mathbb Z$, then  no cyclically reduced form of $w$ in terms of $x$ and $y$ can contain $x$ and $x^{-1}$ $($and $y$ and $y^{-1}$$)$ simultaneously.

A simple closed curve in the boundary of a genus-$2$ handlebody $W$ represents an element of $\pi_1(W) = \mathbb Z \ast \mathbb Z$.
We call a pair of essential disks in $W$ a {\it complete meridian system} for $W$ if the union of the two disks cuts off $W$ into a $3$-ball.
Given a complete meridian system $\{F, G\}$, assign symbols $x$ and $y$ to circles $\partial F$ and $\partial G$ respectively.
Suppose that an oriented simple closed curve $l$ on $\partial W$ that meets $\partial F \cup \partial G$ transversely and minimally.
Then $l$ determines a word in terms of $x$ and $y$ which can be read off from the the intersections of $l$ with $\partial F$ and $\partial G$ (after a choice of orientations of $\partial F$ and $\partial G$), and hence $l$ represents an element of the free group $\pi_1 (W) = \left< x, y\right>$.

In this set up, the following is a simple criterion for the primitiveness of the elements represented by such a simple closed curve.

\begin{lemma}
With a suitable choice of orientations of $\partial F$ and $\partial G$, if a word determined by the simple closed curve $l$ contains one of the sub-words $yxy^{-1}$ or $xyxy^n$ for $n \geq 3$, then any element in $\pi_1 (W)$ represented by $l$ cannot be a primitive element.
\par
\label{lem:criterion}
\end{lemma}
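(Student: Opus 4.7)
The plan is to argue by contradiction: assume $l$ represents a primitive element $w \in \pi_1(W) = \langle x, y\rangle$ and derive a contradiction from the presence of either subword. Since $l$ meets $\partial F \cup \partial G$ minimally, the word read off from $l$ is already cyclically reduced, so Proposition \ref{prop:generator} applies to it directly and puts $w$ into one of two forms: (Type A) a cyclic product of terms $x^\epsilon y^k$ and $x^\epsilon y^{k+1}$ for some fixed $\epsilon \in \{\pm 1\}$ and $k \in \mathbb{Z}$, or (Type B) the symmetric form with the roles of $x$ and $y$ interchanged.

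The subword $yxy^{-1}$ is immediate: it exhibits both $y$ and $y^{-1}$ in $w$, contradicting the remark following Proposition \ref{prop:generator} that in a cyclically reduced primitive word each of $x$ and $y$ appears with only one sign of exponent. For the subword $xyxy^n$ with $n \geq 3$, only positive powers of $x$ and $y$ occur in the subword, so by the same remark $\epsilon = +1$ in either type. Type A is then ruled out directly: the two occurrences of $x$ in the subword begin consecutive terms whose $y$-exponents are $1$ and $\geq n \geq 3$, and these cannot both lie in a pair $\{k,k+1\}$ of consecutive integers.

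The Type B case is the main obstacle; I plan to handle it by applying Proposition \ref{prop:generator} a second time, after a change of basis. Writing $w$ as a cyclic product $\prod y x^{b_j}$ and parsing the subword $xyxy^n$, the lone $x$ between two $y$'s forces a term $yx$ (giving some $b_j = 1$), while each of the first $n-1$ consecutive $y$'s in the $y^n$ block is a term with no $x$'s (giving some $b_j = 0$); this forces $k=0$ and shows that $w$ is a cyclic product of $y$'s and $yx$'s. Setting $u = yx$ and $v = y$ gives another basis of $\langle x, y\rangle$, in which $w$ is still primitive and appears as a positive cyclic word in $u, v$ containing $u^2 v^n$ as a subword. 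Applying Proposition \ref{prop:generator} to $w$ in the basis $\{u, v\}$ rules out both of its sub-types: in the Type A form the $v$-exponent between the two adjacent $u$'s is $0$ while the $v$-exponent after the second $u$ is $\geq n \geq 3$, forbidden by consecutivity; in the Type B form, $u^2$ must occur within a single term (so some $u$-exponent is $\geq 2$) while the $v^n$ block forces at least two consecutive $v$-only terms (so some $u$-exponent is $0$), again violating consecutivity. This contradiction shows that $w$ could not have been primitive, completing the proof.
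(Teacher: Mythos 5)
Your argument has a genuine gap at its very first step: the claim that because $l$ meets $\partial F \cup \partial G$ transversely and minimally, the word read off from $l$ is automatically cyclically reduced. Minimal position only forbids bigons; it does not forbid an \emph{essential} arc of $l \cap \Sigma'$ with both endpoints on the same boundary circle of the cut-open surface (a ``wave''). Such an arc contributes an adjacent cancelling pair $x^{\pm1}x^{\mp1}$ or $y^{\pm1}y^{\mp1}$ to the read-off word, yet cannot be removed by an isotopy of $l$ in $\Sigma$, because the word records the class in $\pi_1(W)$, not in $\pi_1(\Sigma)$; waves are ubiquitous in minimal-position genus-two diagrams. Without reducedness the hypothesis has no algebraic force at all: the unreduced word $yxy^{-1}$ by itself represents a conjugate of $x$, which \emph{is} primitive, so Proposition \ref{prop:generator} cannot be applied to the read-off word directly. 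Establishing reducedness is exactly the topological heart of the paper's proof: for the subword $yxy^{-1}$, the two arcs realizing the adjacencies, together with the counts $|l \cap f_+| = |l \cap f_-|$ and $|l \cap g_+| = |l \cap g_-|$, force arcs joining $f_\pm$ to $g_\pm$ in all four combinations, and these block every wave. Note moreover that for the subword $xyxy^n$ the paper never even claims the original word is cyclically reduced --- it replaces $G$ by a new meridian $H$, the frontier of a neighborhood of $f_+ \cup c \cup g_+$, and proves reducedness afresh in the coordinates $\{x, z\}$, where the word visibly contains $x^2$ and $z^2$. So your plan of running Proposition \ref{prop:generator} on the original read-off word in the second case cannot be repaired merely by importing the paper's first-case counting argument; the needed input (that the full cyclic word read from $l$ is reduced, hence positive, before you parse it into $y$- and $yx$-terms) is precisely what is unproven.

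Granting reducedness, your algebraic half is correct and is in fact a genuinely different route for the $xyxy^n$ case: your substitution $u = yx$, $v = y$ is the algebraic shadow of the paper's geometric change of meridian system $\{F, G\} \to \{F, H\}$, and a second application of Proposition \ref{prop:generator} in the basis $\{u, v\}$ --- where positivity of the word gives cyclic reducedness for free --- correctly eliminates both types. One small slip there: parsing the subword only guarantees $u^2v^{n-1}$, not $u^2v^n$, since the final $y$ of the $y^n$-block may combine with a following $x$ into a $u$; this is harmless, as $n-1 \geq 2$ still yields the $vv$-adjacency (forcing a $u$-exponent $0$) that clashes with the $u$-block of length at least $2$. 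But as it stands, the proof proposal replaces the paper's essential topological work with a false general principle, and that work cannot be skipped.
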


\begin{figure}
\labellist
\pinlabel {\small $f_+$} [B] at -1 118
\pinlabel {\small $f_-$} [B] at -1 62
\pinlabel {\small $g_+$} [B] at 135 118
\pinlabel {\small $g_-$} [B] at 135 62
\pinlabel {\small $f_+$} [B] at 378 118
\pinlabel {\small $f_-$} [B] at 378 62
\pinlabel {\small $h_+$} [B] at 515 118
\pinlabel {\small $h_-$} [B] at 515 62
\pinlabel {\small $l_+$} [B] at 55 110
\pinlabel {\small $l_-$} [B] at 35 95
\pinlabel {\small $m_+$} [B] at 37 80
\pinlabel {\small $m_-$} [B] at 58 64
\pinlabel {\small $c$} [B] at 231 115
\pinlabel {\small $x^2$} [B] at 415 91
\pinlabel {\small $z^2$} [B] at 477 91
\pinlabel {\large $F$} at 160 97
\pinlabel {\large$G$} at 348 97
\pinlabel {\large$H$} at 242 79
\pinlabel {\Large$\Sigma'$} [B] at 131 30
\pinlabel {\Large$W$} [B] at 331 30
\pinlabel {\Large$\Sigma''$} [B] at 513 30
\pinlabel {\large(a)} [B] at 63  0
\pinlabel {\large(b)} [B] at 251 0
\pinlabel {\large(c)} [B] at 442 0
\endlabellist
\begin{center}
\includegraphics[width=1\textwidth]{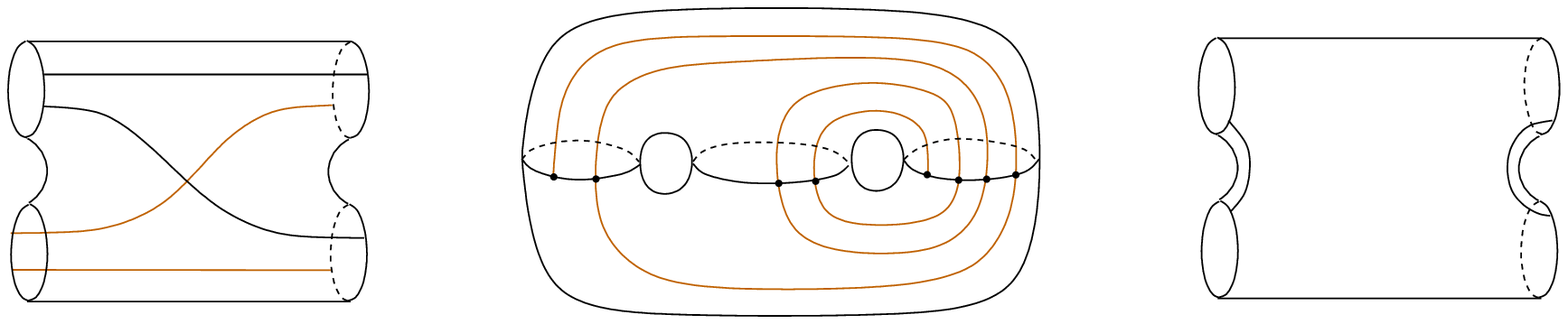}
\caption{The $4$-holed spheres $\Sigma'$ and $\Sigma''$.}
\label{nonprimitive}
\end{center}
\end{figure}

\begin{proof}
Let $\Sigma'$ be the $4$-holed sphere cut off from $\partial W$ along $\partial F \cup \partial G$, and denote by $f_+$ and $f_-$ $($respectively $g_+$ and $g_-$$)$ the boundary circles of $\Sigma'$ that came from $\partial F$ $($respectively $\partial G$$)$.

Suppose first that a word represented by $l$ contains a sub-word of the form $yxy^{-1}$.
Then we may assume that there are two arcs $l_+$ and $l_-$ of $l \cap \Sigma'$ such that $l_+$ connects $f_+$ and $g_+$, and $l_-$ connects $f_+$ and $g_-$ as in Figure \ref{nonprimitive} (a).
Since $|~l \cap f_+| = |~l \cap f_-|$ and $|~l \cap g_+| = |~l \cap g_-|$, we must have two other arcs $m_+$ and $m_-$ of $l \cap \Sigma'$ such that $m_+$ connects $f_-$ and $g_+$, and $m_-$ connects $f_-$ and $g_-$.
We see then there exists no arc component of $l \cap \Sigma'$ that meets only one of $f_+$, $f_-$, $g_+$ and $g_-$.
That is, any word determined by $l$ contains neither $x^{\pm1} x^{\mp1}$ nor $y^{\pm1} y^{\mp1}$, and so each word is cyclically reduced.
But a word determined by $l$ already contains both $y$ and $y^{-1}$, and so $l$ cannot represent a primitive element of $\pi_1(W)$ by Proposition \ref{prop:generator}.

Next, suppose that a word represented by $l$ contains a sub-word of the form $xyxy^n$ for $n \geq 3$.
We may assume there is an arc $c$ of $l \cap \Sigma'$ connecting $f_+$ and $g_+$ in $\Sigma'$.
Consider the circle which is the frontier of a regular neighborhood of $f_+ \cup c \cup g_+$ in $\Sigma'$.
This circle bounds a disk $H$ in $W$, and $\{F, H\}$ forms a complete meridian system of $W$, and so assigning symbols $x$ and $z$ to $\partial F$ and $\partial H$ respectively, the circle $l$ represents an elements of $\pi_1(W) = \langle x, z \rangle$.
See Figure \ref{nonprimitive} (b).

Let $\Sigma''$ be the $4$-holed sphere cut off from $\partial W$ along $\partial F \cup \partial H$, and denote by $f_+$ and $f_-$ $($respectively $h_+$ and $h_-$$)$ the boundary circles of $\Sigma''$ that came from $\partial F$ $($respectively $\partial H$$)$.
There are two arcs of $l \cap \Sigma''$ such that one connects $f_+$ and $f_-$, and the other one connects $h_+$ and $h_-$, and we may assume that these two arcs represent sub-word of the form $x^2$ and $z^2$ (see Figure \ref{nonprimitive} (c)).
Thus there exists no arc component of $l \cap \Sigma''$ that meets only one of $f_+$, $f_-$, $h_+$ and $h_-$.
That is, each word represented by $l$ is cyclically reduced.
But a word determined by $l$ already contains both $x^2$ and $z^2$, and so $l$ cannot represent a primitive element of $\pi_1(W)$ by Proposition \ref{prop:generator} again.
\end{proof}

\section{Primitive disks in a handlebody}
\label{sec:primitive_disks}

Recall that $(V, W; \Sigma)$ denotes a genus two Heegaard splitting of a lens space $L = L(p, q)$ with $p \geq 2$.
We call an essential disk $E$ in $V$ {\it primitive} if there exists an essential disk $E'$ in $W$ such that $\partial E$ intersects $\partial E' $ transversely in a single point.
Such a disk $E'$ is called a {\it dual disk} of $E$.
Note that $E'$ is also primitive in $W$ with a dual disk $E$, and $W \cup \Nbd(E)$ and $V \cup \Nbd(E')$ are all solid tori.
Primitive disks are necessarily non-separating.
We call a pair of disjoint, non-isotopic primitive disks in $V$ a {\it primitive pair} in $V$.
Similarly, a triple of pairwise disjoint, non-isotopic primitive disks (if exists) is a {\it primitive triple}.

A nonseparating disk $E_0$ properly embedded in $V$ is called {\it semiprimitive} if there is a primitive disk $E'$ in $W$ such that $\partial E'$ is disjoint from $\partial E_0$.
With a suitable choice of oriented meridian and longitude circles on the boundary of the solid torus $W$ cut off by $E'$, the oriented boundary circle $\partial E_0$ can be considered as a $(p, 1)$-curve on the boundary of the solid torus.

Any simple closed curve on the boundary of the solid torus $W$ represents an element of $\pi_1 (W)$ which is the free group of rank two.
We can interpret primitive disks algebraically as follows, which is a direct consequence of \cite{Go}.

\begin{lemma}
Let $D$ be a nonseparating disk in $V$.
Then $D$ is primitive if and only if $\partial D$ represents a primitive element of $\pi_1 (W)$.
\label{lem:primitive_element}
\end{lemma}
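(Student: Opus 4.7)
The plan is to prove the two implications separately: the forward direction reduces to an algebraic observation in the free group of rank two, while the reverse direction is precisely the geometric result of \cite{Go}.

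For the forward direction, suppose $D$ is primitive with dual disk $E'$ in $W$. I would first extend $E'$ to a complete meridian system $\{E', G'\}$ of $W$, which is possible because any non-separating disk in a genus-two handlebody can be completed to such a system. Writing $x = [\partial E']$ and $y = [\partial G']$ for the resulting free generators of $\pi_1(W)$, one reads off a word $w$ representing $\partial D$ in these generators as set up in Section~\ref{sec:free_group}. Since $|\partial D \cap \partial E'| = 1$ by hypothesis, the letter $x^{\pm 1}$ appears in $w$ exactly once, so after a cyclic conjugation $w = y^a x^\epsilon y^b$ for some integers $a, b$ and sign $\epsilon \in \{\pm 1\}$. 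Applying the Nielsen automorphism $x \mapsto y^{-a} x y^{-b}$, $y \mapsto y$ carries $w$ to $x^\epsilon$, so $\{w, y\}$ is a free generating pair of $\pi_1(W)$ and hence $w$ is primitive.

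For the reverse direction, I would invoke the theorem of \cite{Go}: a simple closed curve on the boundary of a handlebody represents a primitive element of its fundamental group if and only if there is a properly embedded essential disk in the handlebody meeting the curve transversely in exactly one point. Applied to $\partial D \subset \partial W$, this immediately yields the desired dual disk $E' \subset W$, showing that $D$ is primitive.

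The main obstacle is the reverse direction, which is not purely algebraic: one must geometrically realize the primitivity of the conjugacy class $[\partial D]$ as geometric intersection $\pm 1$ with an honest meridian disk of $W$, and that is exactly what the result of \cite{Go} provides. The forward direction, by contrast, is essentially immediate from the dictionary between transverse boundary intersections and free-group words developed at the end of Section~\ref{sec:free_group}.
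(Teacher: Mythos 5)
Your proposal is correct and takes essentially the same route as the paper, which offers no independent proof and simply records the lemma as a direct consequence of Gordon's theorem in \cite{Go} (itself an if-and-only-if, so it covers the forward direction you prove by hand as well as the reverse one). One small slip in your forward argument: the automorphism $x \mapsto y^{-a}xy^{-b}$, $y \mapsto y$ carries $w = y^{a}x^{\epsilon}y^{b}$ to $x^{\epsilon}$ only when $\epsilon = +1$ (for $\epsilon = -1$ it gives $y^{a+b}x^{-1}y^{a+b}$), but since $w$ is in any case conjugate to $x^{\epsilon}y^{a+b}$, which forms a generating pair with $y$, primitivity follows regardless.
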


Note that no disk can be both primitive and semiprimitive since the boundary circle of a semiprimitive disk in $V$ represents the $p$-th power of a primitive element of $\pi_1(W)$.

Let $D$ and $E$ be essential disks in $V$, and suppose that $D$ intersects $E$ transversely and minimally.
Let $C \subset D$ be a disk cut off from $D$ by an outermost arc $\beta$ of $D \cap E$ in $D$ such that $C \cap E= \beta$.
We call such a $C$ an {\it outermost subdisk} of $D$ cut off by $D \cap E$.
The arc $\beta$ cuts $E$ into two disks, say $G$ and $H$.
Then we have two essential disks $E_1$ and $E_2$ in $V$ which are isotopic to disks $G \cup C$ and $H \cup C$ respectively.
We call $E_1$ and $E_2$ the {\it disks from surgery} on $E$ along the outermost subdisk $C$ of $D$ cut off by $D \cap E$.
Observe that each of $E_1$ and $E_2$ has fewer arcs of intersection with $D$ than $E$ had, since at least the arc $\beta$ no longer counts.

Since $E$ and $D$ are assumed to intersect minimally, $E_1$ (and $E_2$) is isotopic to neither $E$ nor $D$.
In particular, if both of $D$ and $E$ are nonseparating, then the resulting disks $E_1$ and $E_2$ are both nonseparating and they are not isotopic to each other, and further, $E_1$ and $E_2$ are meridian disks of the solid torus $V$ cut off by $E$, and the boundary circles $\partial E_1$ and $\partial E_2$ are not isotopic to each other in the two holed torus $\partial V$ cut off by $\partial E$.

\begin{theorem}
Let $(V, W; \Sigma)$ be the genus two Heegaard splitting of the lens space $L = L(p, 1)$, $p \geq 2$.
Let $D$ and $E$ be primitive disks in $V$ which intersect each other transversely and minimally.
Then one of the two disks from surgery on $E$ along an outermost subdisk of $D$ cut off by $D \cap E$ is primitive.
Furthermore, it has a common dual disk with $E$.
\label{thm:key}
\end{theorem}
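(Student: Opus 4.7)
The plan is to fix a dual disk $E'$ of $E$ in $W$ and show that $E'$ itself is a dual disk of one of the surgery disks $E_1, E_2$; primitivity of that $E_i$ will then follow from Lemma \ref{lem:primitive_element}, and the ``common dual disk'' claim is immediate.

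First I would record the geometric picture on $\Sigma$. Write $\beta = C \cap E$ for the outermost arc, with endpoints $p_1, p_2 \in \partial D \cap \partial E$. The points $p_1, p_2$ split $\partial E$ into two arcs $g, h$, and the arc $\gamma := \partial C \setminus \beta \subset \partial D$ also joins $p_1$ to $p_2$. Up to isotopy in $\Sigma$ one has $\partial E_1 = g \cup \gamma$ and $\partial E_2 = h \cup \gamma$. Since $|\partial E \cap \partial E'| = 1$, exactly one of $g, h$, say $g$, carries the single transverse intersection point, so the pre-minimisation counts are
\[
|(g \cup \gamma) \cap \partial E'| \;=\; 1 + |\gamma \cap \partial E'|, \qquad |(h \cup \gamma) \cap \partial E'| \;=\; |\gamma \cap \partial E'|.
\]

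Now comes the easy case: if $\partial D$ can be isotoped off $\partial E'$, then $\gamma \cap \partial E' = \emptyset$, so $\partial E_1$ meets $\partial E'$ in exactly one transverse point. Because algebraic intersection $\pm 1$ is minimal, this count cannot be reduced by isotopy, so $E'$ is genuinely a dual disk of $E_1$, and Lemma \ref{lem:primitive_element} gives that $E_1$ is primitive.

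The heart of the argument is therefore to replace $E'$, if necessary, by a dual disk of $E$ that is disjoint from $\partial D$. I would choose $E'$, among all dual disks of $E$ in $W$, to minimise $|\partial E' \cap \partial D|$, and argue this minimum is zero. Assuming for contradiction that the minimum is positive, take an outermost subdisk $C'$ of $E'$ cut off by $E' \cap D$ on the $W$-side and band $E'$ across $C'$ to produce a new disk $E''$. One must then verify that $E''$ is still essential in $W$ and still meets $\partial E$ in a single point, so that $E''$ is again a dual disk of $E$ but with strictly fewer intersections with $\partial D$. This is the step where the primitivity hypotheses on $E$ and $D$ and the special form $L(p,1)$ must be used: the cyclic word for $\partial E$ in $\pi_1(W) = \langle x, y\rangle$ coming from the complete meridian system $\{E', F'\}$ is of the restrictive form $x^{\pm 1} y^k$ (since $\partial E$ crosses $\partial E'$ once), and combining this with the analogous rigidity for $[\partial D]$ via Proposition \ref{prop:generator} should force the banding modification to succeed; failure would produce one of the forbidden sub-words $y x y^{-1}$ or $x y x y^n$ of Lemma \ref{lem:criterion} in a cyclic word for $\partial E$ or $\partial D$, contradicting primitivity.

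The main obstacle I expect is this minimisation step, i.e.\ executing the disk-surgery on $E'$ and certifying that the result remains a dual disk of $E$. The hypothesis $q = 1$ (as opposed to general $L(p,q)$, which the author defers to \cite{C-K}) is what ought to rule out the recalcitrant cases, since for $L(p,1)$ the combinatorics of primitive boundary words is tight enough that Lemma \ref{lem:criterion} catches every obstruction to reducing $|\partial E' \cap \partial D|$ to zero.
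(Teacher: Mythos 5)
Your plan fails at its central step, in two independent ways. First, the proposed reduction move is not defined: $D$ is properly embedded in $V$ while $E'$ is properly embedded in $W$, so $D$ and $E'$ meet only in boundary points on $\Sigma$. There is no pattern of arcs $E' \cap D$ on $E'$, hence no ``outermost subdisk $C'$ of $E'$ cut off by $E' \cap D$'' and no band move across it; disk surgery of the kind used in this paper requires two disks in the \emph{same} handlebody. Second, and more fatally, the goal of your minimisation is unattainable, so no repair of the move can save the scheme: for $p \geq 2$ there is \emph{no} dual disk $E'$ of $E$ with $\partial E' \cap \partial D = \emptyset$ when $D$ is primitive. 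Indeed, if $\partial D$ missed $\partial E'$, then $\partial D$ would lie on the boundary of the solid torus $W' = \cl(W \setminus \Nbd(E'))$, and since $D$ bounds in $V \subset V^+ := V \cup \Nbd(E')$, its slope on this torus would have to be the meridian slope of the solid torus $V^+$. But $V^+ \cup W'$ is the genus-$1$ splitting of $L(p,1)$, so that meridian is a $(p,1)$-curve on $\partial W'$, and $[\partial D]$ would then be a $p$-th power of a primitive element of $\pi_1(W)$ --- exactly the semiprimitive situation of the disk $E_0$, which is never primitive for $p \geq 2$ by Lemma \ref{lem:primitive_element}. (A curve bounding a disk on the torus $\partial V^+$ is ruled out likewise, since it would force $D$ to be inessential or separating.) So your ``easy case'' never occurs, and the minimum of $|\partial E' \cap \partial D|$ is always strictly positive.

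The paper's proof sidesteps both problems by doing all surgeries inside $V$: it chooses the dual disk $E'$ of $E$ to minimise $|C \cap E_0|$, where $E_0 \subset V$ is the semiprimitive disk determined by $E'$ and $C$ is the outermost subdisk of $D$ --- an intersection of two disks in the same handlebody, where outermost-arc surgery is legitimate. It then produces a sequence $E_0, E_1, \dots, E_p$ of disks in $V$ by successive surgeries along outermost subdisks of $C$, with word control $\partial E_{j+1} \sim (xy)^j x y^{p-j}$ enforced at each stage by the primitivity of $D$ via Lemma \ref{lem:criterion} (this is where your one-shot appeal to that lemma is replaced by a length-$p$ induction), and derives a contradiction with the minimality of $|C \cap E_0|$ unless $C$ is disjoint from $E_1$; in that case $E_1$ is one of the two disks from surgery on $E$ along $C$ and visibly meets $\partial E'$ once, giving both primitivity and the common dual. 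Your intuition that $q=1$ makes the word combinatorics rigid enough for Lemma \ref{lem:criterion} to catch every obstruction is correct in spirit, but the quantity you chose to minimise and the move you chose to decrease it do not exist in the geometry of the problem.
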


\begin{proof}
We will prove the theorem only for $p \geq 5$.
The cases of $p \in \{2, 3, 4\}$ will be similar but simpler.

Let $C$ be an outermost subdisk of $D$ cut off by $D \cap E$.
The choice of a dual disk $E'$ of $E$ determines a unique semiprimitive disk $E_0$ in $V$.
That is, the meridian disk $E_0$ of $V$ disjoint from $E \cup E'$.
Among all the dual disks of $E$, choose one, denoted by $E'$ again, so that the semiprimitive $E_0$ determined by $E'$ intersects $C$ minimally.
Further, there is a unique semiprimitive disk $E'_0$ in $W$ disjoint from $E \cup E'$.
We give symbols $x$ and $y$ on oriented $\partial E'$ and $\partial E'_0$ respectively to have $\pi_1(W) = \langle x, y \rangle$.
For convenience, we simply identify the boundary circles $\partial E'$ and $\partial E'_0$ with the assigned symbols $x$ and $y$ respectively.
Notice that the circle $y$ is disjoint from $\partial E$ and intersects $\partial E_0$ in $p$ points in the same direction, and $x$ is disjoint from $\partial E_0$ and intersects $\partial E$ in a single point.
Thus we may assume that $\partial E_0$ and $\partial E$ determine the words $y^p$ and $x$ respectively.

Let $\Sigma_0$ be the $4$-holed sphere $\partial V$ cut off by $\partial E \cup \partial E_0$.
We regard $\Sigma_0$ as a $2$-holed annulus where the two boundary circles came from $\partial E_0$ and the two holes came from $\partial E$.
Then $y \cap \Sigma_0$ is the union of $p$ spanning arcs which cut $\Sigma_0$ into $p$ rectangles, and $x$ is a single arc connecting two holes which are contained in a single rectangle. See Figure \ref{first_sigma} (a).

\begin{figure}
\labellist
\pinlabel {\small $\alpha$} [B] at 130 149
\pinlabel {\small $\partial E_0$} [B] at 105 120
\pinlabel {\small $\partial E_0$} [B] at 7 40
\pinlabel {\small $C \cap \Sigma_0$} [B] at 104 72
\pinlabel {\small $\partial E_1$} [B] at 104 40
\pinlabel {\small $\partial E_0$} [B] at 387 120
\pinlabel {\small $\partial E_0$} [B] at 298 40
\pinlabel {\small $C \cap \Sigma_0$} [B] at 387 45
\pinlabel {\large(a)} [B] at 98 -5
\pinlabel {\large(b)} [B] at 390 -5
\endlabellist
\begin{center}
\includegraphics[width=0.7\textwidth]{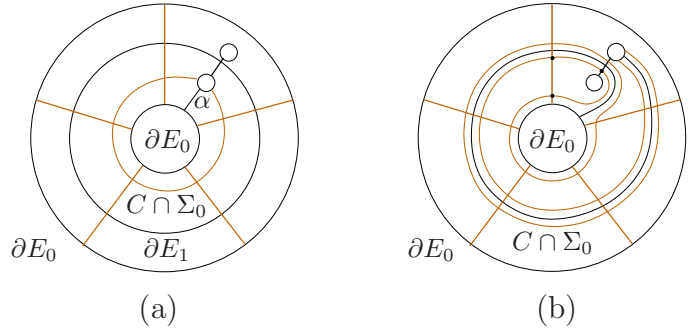}
\caption{
The $2$-holed annulus $\Sigma_0$ in $L(5, 1)$.}
\label{first_sigma}
\end{center}
\end{figure}

Suppose first that $C$ is disjoint from $E_0$.
Note that one of the disks from surgery on $E$ along $C$ is $E_0$, semiprimitive.
The arc $C \cap \Sigma_0$ is the frontier of a regular neighborhood of the union of one boundary circle of $\Sigma_0$ and an arc $\alpha$ connecting the boundary circle to a hole.
Observe that the arc $\alpha$ is disjoint from $y \cap \Sigma_0$, otherwise a word of $\partial D$ must contain $yxy^{-1}$ (after changing orientation if necessary), which contradicts that $D$ is primitive, by Lemma \ref{lem:criterion}.
See Figure \ref{first_sigma} (b).
Consequently, if we denote by $E_1$ the disk from surgery that is not $E_0$, then $\partial E_1$  intersects $\partial E'$ in a single point.
That is, the resulting disk $E_1$ is primitive with the common dual disk $E'$ of $E$.
See Figure \ref{first_sigma} (a).

From now on, we assume that $C$ intersects $E_0$.
Let $C_0$ be an outermost subdisk of $C$ cut off by $C \cap E_0$.
The arc $C_0 \cap \Sigma_0$ is the frontier of a regular neighborhood of one hole of $\Sigma_0$ and an arc, say  $\alpha_0$, connecting the hole to a boundary circle of $\Sigma_0$.
By the same reason to the case of $\alpha$, the arc $\alpha_0$ is disjoint from $y \cap \Sigma_0$.
Thus one of the disks from surgery on $E_0$ along $C_0$ is $E$, and the other one, denoted by $E_1$ again, is primitive since $\partial E_1$ intersects $\partial E'$ in a single point as in the previous case.
Note that $|C \cap E_1| < |C \cap E_0|$ from the surgery construction.
See $\Sigma_0$ in Figure \ref{sequence}.

Let $\Sigma_1$ be the $4$-holed sphere $\partial V$ cut off by $\partial E \cup \partial E_1$.
We regard $\Sigma_1$ as a $2$-holed annulus, like $\Sigma_0$, where the two boundary circles came from $\partial E_1$ and the two holes came from $\partial E$.
Then $y \cap \Sigma_1$ is the union of $p$ spanning arcs which cut $\Sigma_1$ into $p$ rectangles as in the case of $\Sigma_0$, but the two holes came from $\partial E$ are now contained in different consecutive rectangles, and also $x \cap \Sigma_1$ is the union of two arcs each joining a holes and a boundary circle of $\Sigma_1$ as in Figure \ref{sequence}.
If the original subdisk $C$ is disjoint from $E_1$, then we are done since $E_1$ is the desired primitive disk resulting from the surgery.

\begin{figure}
\labellist

\pinlabel {\small $\partial E_0$} [B] at 83 315
\pinlabel {\small $\partial E_1$} [B] at 274 315
\pinlabel {\small $\partial E_2$} [B] at 469 315
\pinlabel {\small $\partial E_0$} [B] at 5 255
\pinlabel {\small $\partial E_1$} [B] at 201 255
\pinlabel {\small $\partial E_2$} [B] at 392 255

\pinlabel {\small $\partial E_3$} [B] at 83 108
\pinlabel {\small $\partial E_4$} [B] at 274 108
\pinlabel {\small $\partial E_5$} [B] at 469 108
\pinlabel {\small $\partial E_3$} [B] at 5 42
\pinlabel {\small $\partial E_4$} [B] at 201 42
\pinlabel {\small $\partial E_5$} [B] at 392 42

\pinlabel {\small $\partial E_1$} [B] at 83 270
\pinlabel {\small $\partial E_2$} [B] at 274 270
\pinlabel {\small $\partial E_3$} [B] at 427 300
\pinlabel {\small $\partial E_4$} [B] at 55 143
\pinlabel {\small $\partial E_5$} [B] at 228 108

\pinlabel {\small $\alpha_0$} [B] at 110 341
\pinlabel {\small $\alpha_1$} [B] at 301 340

\pinlabel {\Large $\Sigma_0$} [B] at 83 215
\pinlabel {\Large $\Sigma_1$} [B] at 274 215
\pinlabel {\Large $\Sigma_2$} [B] at 469 215
\pinlabel {\Large $\Sigma_3$} [B] at 83 15
\pinlabel {\Large $\Sigma_4$} [B] at 274 15
\pinlabel {\Large $\Sigma_5$} [B] at 469 15

\pinlabel {\small $z$} [B] at 501 147
\pinlabel {\small $\partial E_4$} [B] at 485 58

\endlabellist
\centering
\includegraphics[width=0.9\textwidth]{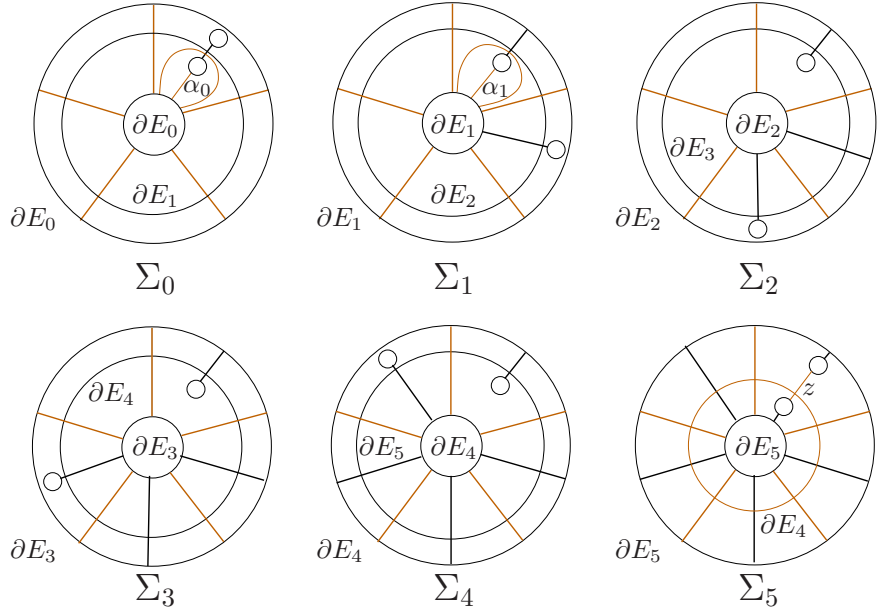}
\caption{The sequence of $2$-holed annuli from the consecutive surgeries for $L(5, 1)$.}
\label{sequence}
\end{figure}

Suppose that $C$ also intersects $E_1$, and let $C_1$ be an outermost subdisk of $C$ cut off by $C \cap E_1$.
Then $C_1 \cap \Sigma_1$ is the frontier of a regular neighborhood of the union of one hole of $\Sigma_1$ and an arc, say $\alpha_1$, connecting the hole to a boundary circle.
The arc $\alpha_1$ is also disjoint from $y \cap \Sigma_1$ by the same reason to $\alpha_0$.
Thus if we denoted by $E_2$ the disk from surgery on $E_1$ along $C_1$ that is not $E$, then $\partial E_2$ represents a word $xyxy^{p-1}$.
See $\Sigma_1$ Figure \ref{sequence}.

We continue such a construction repeatedly whenever $C$ also intersects the next disk.
For each $1 \leq j \leq p-1$, if $C$ intersects $E_j$, then we obtain the disk $E_{j+1}$ from surgery on $E_j$ along an outermost subdisk $C_j$ cut off by $C \cap E_j$.
We see that $|C \cap E_{j+1}| < |C \cap E_j|$ from the surgery construction.
In the $2$-holed annulus $\Sigma_j$, the arc $C_j \cap \Sigma_j$ is the frontier of a regular neighborhood of the union of a hole of $\Sigma_j$ and an arc $\alpha_j$ connecting the hole to a boundary circle.
The arc $\alpha_j$ is disjoint from $y \cap \Sigma_j$, and so $\partial E_{j+1}$ represents a word of the form $(xy)^jxy^{p-j}$.
In particular, notice that the disks $E_p$ is semiprimitive and $E_{p-1}$ is primitive, since there is a primitive disk $E''$ in $W$ disjoint from $\partial E_p$ and intersects $\partial E_{p-1}$ in a single point.
Such an $E''$ is not hard to find.
In the final $2$-holed annulus $\Sigma_5$ in Figure \ref{sequence}, the arc $z$ is the boundary circle of $E''$ in $\Sigma_p$.
Note that $z$ is disjoint from $x \cup y$, and so it does bound a disk $E''$ in the $3$-ball $W$ cut off by $E' \cup E'_0$.
Also $z$ intersects $\partial E_{p-1}$ in a single point and is disjoint from $\partial E_p$.

We remark that each of the arcs $\alpha_j$, $j \in \{0, 1, \cdots, p-1\}$, is disjoint from the circle $y$ due to the fact that $D$ is primitive. There are infinitely many arcs $\alpha_0$ that are not isotopic to each other in $\Sigma_0$, but each arc $\alpha_j$, $j \geq 1$, is unique up to isotopy in $\Sigma_j$.
Therefore, once $E_1$ is determined, we have the unique sequence of disks $E_2, E_3, \cdots, E_p$ only under the condition that each $\alpha_j$ is disjoint from $y$.

\medskip

{\noindent \sc Claim.}
For each $j \in \{2, 3, \cdots, p-1\}$, the subdisk $C$ intersects $E_j$.

\smallskip

{\noindent \it Proof of claim.}
Suppose not, and let $E_j$ be the first disk disjoint from $C$.
First, suppose that $j \in \{2, 3, \cdots, p-3\}$.
Then $C$ is disjoint from $E_j$ and intersects $E_{j-1}$, and so the arc $\partial C \cap \Sigma_j$ gives a subword of $\partial D$ of the form $(yx)^j y^{p-j}$ which implies that $D$ is not primitive by Lemma \ref{lem:criterion} again, a contradiction.
Next, suppose that $j = p-2$. That is, $C$ is disjoint from $E_{p-2}$ and intersects $E_{p-3}$.
Then one of the resulting disks from surgery on $E$ along $C$ is $E_{p-2}$, and the other one is exactly $E_{p-1}$ which is the disk in the sequence of disks in the previous construction.
The subdisk $C$ is disjoint from $E_{p-2} \cup E_{p-1}$, and consequently, $C$ intersects necessarily the semiprimitive disk $E_p$ in the previous construction in a single arc.
That is, $|C \cap E_p| = 1$.
But from the consecutive surgery constructions for $j \in \{2, 3, \cdots, p-3\}$, we have $1 \leq |C \cap E_{p-3}| <  |C \cap E_0|$, which contradicts the minimality of $|C \cap E_0|$.
Similarly, if $j = p-1$, then we have the same contradiction on the minimality, since $C$ is disjoint from $E_p$ in this case.

\medskip

By the claim, we can do surgery on $E_{p-1}$ along $C_{p-1}$ and one resulting disk from surgery is $E_p$, the semiprimitive disk.
But $|C \cap E_{j+1}| < |C \cap E_j|$ for each $j \in \{1, 2, \cdots, p-1\}$, and consequently $|C \cap E_p| < |C \cap E_0|$, which contradicts the minimality of $|C \cap E_0|$ again.

Therefore the primitive disk $E_1$ is a disk from surgery on $E$ along $C$, and $E'$ is also a dual disk of $E_1$, and so we complete the proof.
We note that the other disk from surgery is either $E_0$ or $E_2$ depending on the cases whether $C$ is disjoint from $E_0$ or not.
\end{proof}

\begin{theorem}
Let $(V, W; \Sigma)$ be the genus two Heegaard splitting of the lens space $L = L(p, 1)$, $p \geq 2$.
Then, for every primitive pair $\{D, E\}$ of $V$, $D$ and $E$ have a common dual disk.
In particular, the two disks of each primitive pair have a unique common dual disk if $p \geq 3$, and have exactly two common dual disks if $p = 2$ which form a primitive pair in $W$.
\label{thm:common_dual}
\end{theorem}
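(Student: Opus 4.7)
The plan is to exploit the word-combinatorics developed in the proof of Theorem~\ref{thm:key}. For existence, let $\{D, E\}$ be a primitive pair in $V$ and choose a dual disk $E'$ of $E$ in $W$ that minimizes $|\partial D \cap \partial E'|$ on $\Sigma$. Following the setup of the proof of Theorem~\ref{thm:key}, let $E_0$ and $E'_0$ be the unique semiprimitive disks in $V$ and $W$ disjoint from $E \cup E'$, and set $x = \partial E'$ and $y = \partial E'_0$, so that $\pi_1(W) = \langle x, y \rangle$, $\partial E$ reads as the word $x$, and $\partial E_0$ reads as $y^p$. By Lemma~\ref{lem:primitive_element} the word $w_D$ determined by $\partial D$ in $\langle x, y \rangle$ is a primitive element. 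Since $\partial D$ is disjoint from $\partial E$, each arc of $\partial D \cap \Sigma_0$ (where $\Sigma_0 = \Sigma \setminus (\partial E \cup \partial E_0)$) has both endpoints on the two holes coming from $\partial E_0$. Running the inductive outermost-subdisk surgery construction of Theorem~\ref{thm:key}'s proof on these arcs and combining with Lemma~\ref{lem:criterion} will force $|\partial D \cap \partial E'| \leq 1$ under the minimization.

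If $|\partial D \cap \partial E'| = 1$, then $E'$ is already a common dual. If $|\partial D \cap \partial E'| = 0$, then $w_D$ is a nontrivial power of $y$; primitivity forces $w_D = y^{\pm 1}$, so $|\partial D \cap \partial E'_0| = 1$. Because $\{D, E\}$ is a complete meridian system of $V$, the surface $\Sigma \setminus (\partial D \cup \partial E)$ is a connected $4$-holed sphere in which $\partial E'$ and $\partial E'_0$ each appear as a single spanning arc. Taking the band sum of $E'$ and $E'_0$ in $W$ along any embedded arc in this $4$-holed sphere joining these two spanning arcs will yield an essential disk $X \subset W$ with $|\partial X \cap \partial D| = |\partial X \cap \partial E| = 1$, the desired common dual.

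For uniqueness, any common dual $X$ of $\{D, E\}$ has $\partial X$ crossing $\partial D$ and $\partial E$ each in exactly one point, so with respect to the basis $\{a, b\}$ of $\pi_1(V) = F_2$ dual to the complete meridian system $\{D, E\}$, the element $\partial X$ of $\pi_1(V)$ reduces to one of the four words $a^{\pm 1} b^{\pm 1}$ up to cyclic conjugacy and inversion. Combined with the requirement that $\partial X$ bound a disk in $W$ --- that is, that it lie in the kernel of $\pi_1(V) \to \pi_1(L(p,1))$ modulo the $\pi_1(W)$-relations --- the Heegaard relations of $L(p, 1)$ will leave exactly one of these four elements realizable when $p \geq 3$, yielding uniqueness, and exactly two when $p = 2$, giving rise to a primitive pair $\{X, Y\}$ in $W$ as asserted. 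As a cross-check I plan to verify the $p=2$ case by applying Theorem~\ref{thm:key} inside $W$ to the resulting intersecting candidates and observing the obstruction to further surgery reduction.

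The main obstacle lies in establishing the intersection bound $|\partial D \cap \partial E'| \leq 1$ in the first step. It requires an analogue of the sequence $E_1, E_2, \ldots, E_p$ of surgery disks and the corresponding $2$-holed annuli $\Sigma_0, \Sigma_1, \ldots$ from the proof of Theorem~\ref{thm:key}, driven this time by arcs of $\partial D$ in $\Sigma_0$ rather than by an outermost subdisk of a transverse disk, and showing that any additional $x^{\pm 1}$ occurrence in $w_D$ would produce a forbidden subword from Lemma~\ref{lem:criterion}. The $p = 2$ branch of the uniqueness count also deserves separate care, since its exceptional symmetry is precisely what permits two distinct common duals to coexist.
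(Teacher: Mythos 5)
Your overall strategy (reduce to the surgery machinery of Theorem \ref{thm:key} for existence, then count candidates for uniqueness) points in the right direction for existence, but both halves have genuine gaps. For existence, you minimize the wrong quantity and you yourself concede the resulting bound is an unproved ``obstacle,'' so the step is not established. The engine of Theorem \ref{thm:key} is a strictly decreasing count of intersections with the \emph{semiprimitive} disks: the paper chooses the dual disk $E'$ of $E$ so that $|\partial D \cap \partial E_0|$ is minimal, where $E_0$ is the semiprimitive disk in $V$ determined by $E'$; the surgery sequence $E_1, \dots, E_p$ terminates at a semiprimitive disk $E_p$, which is precisely the disk $E_0$ associated to a \emph{new} dual disk $E''$, and $|\partial D \cap \partial E_p| < |\partial D \cap \partial E_0|$ contradicts minimality, forcing $D$ to be the disk $E_1$ dual to $E'$. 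If you instead minimize $|\partial D \cap \partial E'|$, the construction hands you nothing to contradict, since it does not produce a dual disk meeting $\partial D$ in fewer points. Separately, your fallback case $|\partial D \cap \partial E'| = 0$ is vacuous: $w_D = y^{\pm 1}$ would make $D$ a dual disk of $E'_0$, but a semiprimitive disk can never be primitive because its boundary represents a $p$-th power with $p \geq 2$ (Lemma \ref{lem:primitive_element}); the band-sum branch never occurs.

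The uniqueness half is the more serious problem: the conjugacy class of $\partial X$ in $\pi_1(V)$ does not determine the isotopy class of $X$, so counting the classes $a^{\pm 1}b^{\pm 1}$ cannot yield uniqueness of the disk. Concretely, the disk twist along $D$ extends over $V$, acts trivially on $\pi_1(V)$, and preserves both $|\cdot \cap \partial D| = 1$ and $|\cdot \cap \partial E| = 1$ (as $\partial D \cap \partial E = \emptyset$), so it moves $\partial X$ through infinitely many non-isotopic simple closed curves all reading the same two-letter word; which, if any, of these bounds a disk in $W$ is invisible to the element of $\pi_1(V)$. Indeed ``bounds a disk in $W$'' is a nullhomotopy condition on the curve in $\Sigma$, not a condition on its image in $\pi_1(V)$, and your phrase ``kernel of $\pi_1(V) \to \pi_1(L(p,1))$ modulo the $\pi_1(W)$-relations'' is not a well-defined criterion. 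At best your computation bounds the number of possible $\pi_1(V)$-classes of common duals (the constraint $\bar{a}\bar{b} = 1$ versus $\bar{a} = \bar{b}$ forces $2\bar{a} = 0$, i.e.\ $p = 2$, which is a correct heuristic for why $p=2$ is exceptional), but it cannot show there is at most one disk per class. The paper closes this gap geometrically: given a common dual $E'$, cut $\Sigma$ along $\partial E' \cup \partial E'_0$; an outermost-arc argument shows that any other common dual $D'$ must be disjoint from $E' \cup E'_0$ (an outermost subdisk of $D'$ incident to $E'$ forces, by equality of $|\partial D' \cap \partial E'_+|$ and $|\partial D' \cap \partial E'_-|$, at least two intersections of $\partial D'$ with $\partial D$, and similarly incidence to $E'_0$ violates duality with $E$), and then one enumerates non-boundary-parallel circles in the resulting $4$-holed sphere meeting each of $\partial D$ and $\partial E$ exactly once: there are none for $p \geq 3$ and exactly one for $p = 2$. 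Some curve-level rigidity argument of this kind is indispensable, and your proposal does not supply it.
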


\begin{proof}
The proof of the existence of a common dual disk goes almost in the same line to that of Theorem \ref{thm:key}, by taking the primitive disk $D$ disjoint from $E$, in stead of the outermost sub-disk $C$ in Theorem \ref{thm:key}.
That is, when we choose a dual disk $E'$ of $E$ so that $|\partial D \cap \partial E_0|$ is minimal where $E_0$ is the unique semiprimitive disk in $V$ disjoint from $\Nbd(E \cup E')$, the primitive disk $D$ must be $E_1$, having the common dual disk $E'$ of $E$.

Now, let $E'$ be a common dual disk of $D$ and $E$.
Let $E_0$ and $E'_0$ be the unique meridian disks of $V$ and $W$ respectively that are disjoint from $\Nbd(E \cup E')$ (see Figure \ref{common_dual} (a)).
Cut the surface $\Sigma$ along $\partial E' \cup \partial E'_0$ to obtain the $4$-holed sphere $\Sigma'$.
Then $\partial E \cap \Sigma'$ is a single arc in $\Sigma'$ connecting the two holes coming from $\partial E'$, and $\partial D \cap \Sigma'$ consists of $p-1$ parallel arcs connecting the two holes coming from $\partial E'_0$ and the two arcs connecting $\partial E'$ to $\partial E'_0$ so that each of the endpoints of the two arcs meets exactly one hole of $\Sigma'$ (see Figure \ref{common_dual} (b)).

\begin{figure}
\labellist
\pinlabel {\small$E_0$} [B] at -5 65
\pinlabel {\small$D$} [B] at 93 52
\pinlabel {\small$E$} [B] at 175 65
\pinlabel {\small$\partial E'$} [B] at 117 85
\pinlabel {\small$\partial E'_0$} [B] at 55 25

\pinlabel {\small$\partial E'_+$} [B] at 373 97
\pinlabel {\small$\partial E'_0$} [B] at 218 97
\pinlabel {\small$\partial E'_-$} [B] at 373 40
\pinlabel {\small$\partial E'_0$} [B] at 218 40

\pinlabel {\small$\partial D$} [B] at 295 105
\pinlabel {\small$\partial D$} [B] at 234 69
\pinlabel {\small$\partial D$} [B] at 295 33
\pinlabel {\small$\partial D'$} [B] at 295 70
\pinlabel {\small$\partial E$} [B] at 356 69

\pinlabel {\small$\alpha'$} [B] at 329 89
\pinlabel {\small$\alpha''$} [B] at 331 51

\pinlabel {\Large$\Sigma$} [B] at 167 12
\pinlabel {\Large$\Sigma'$} [B] at 367 12

\pinlabel {\large(a)} [B] at 86  0
\pinlabel {\large(b)} [B] at 297 0

\endlabellist
\begin{center}
\includegraphics[width=0.85\textwidth]{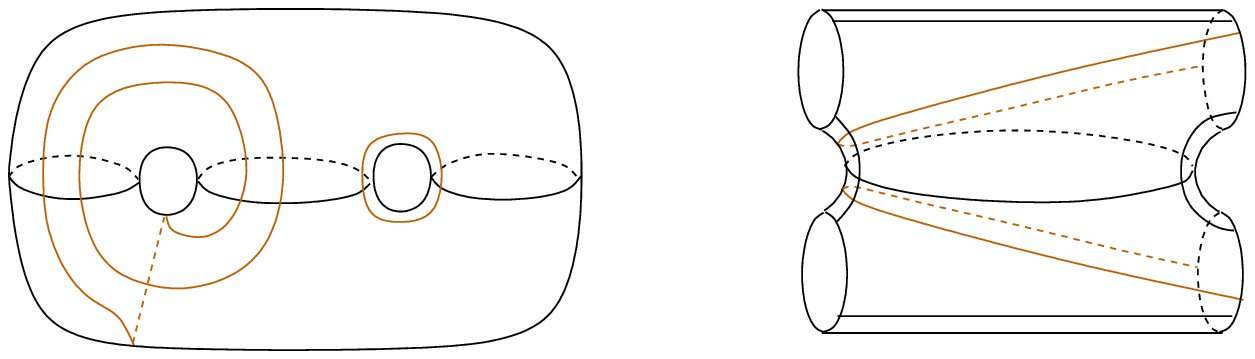}
\caption{The surfaces $\Sigma$ and $\Sigma'$ for $L(2, 1)$.}
\label{common_dual}
\end{center}
\end{figure}

Let $D'$ be a common dual disk of $D$ and $E$ which is not isotopic to $E'$.
Then an outermost sub-disk $C'$ of $D'$ cut off by $D' \cap (E' \cup E'_0)$ would intersects $\partial D$ if $C'$ is incident to $E'$.
Denote by $\partial E'_+$ and $\partial E'_-$ the two holes of $\Sigma'$ came from $\partial E'$.
We may assume that the endpoints of the arc $\alpha' = C' \cap \Sigma'$ meet $\partial E'_+$.
Since $|\partial D' \cap \partial E'_+| = |\partial D' \cap \partial E'_-|$, we must have one more arc component $\alpha''$ of $\partial D' \cap \Sigma'$ other than $C' \cap \Sigma'$ whose endpoints meet $\partial E'_-$ (see Figure \ref{common_dual} (b)).
The arc $\alpha''$ also intersects $\partial D$, and so $\partial D'$ intersects $\partial D$ in more than one point, which contradicts that $D'$ is a dual disk of $D$.
Similarly, if $C'$ is incident to $E'_0$, then $D'$ cannot be a dual disk of $E$.
Thus we see that $D'$ is disjoint from $E' \cup E'_0$.

If $p \geq 3$, there is no possibility of such a disk $D'$ which is disjoint from $E' \cup E'_0$ and is not isotopic to $E'$, and so $E'$ is the unique common dual disk.
If $p = 2$, then there is a unique circle in $\Sigma'$ which is not boundary parallel and which intersects each of $\partial E$ and $\partial D$ exactly once (see the circle $\partial D'$ in Figure \ref{common_dual} (b)).
So we have exactly two common dual disks $D'$ and $E'$ and in this case they are disjoint from each other.
\end{proof}

Given a primitive disk $D$ in $V$, there are infinitely many (non-isotopic) primitive disks each of which forms a primitive pair together with $D$.
But any primitive pair can be contained in at most one primitive triple, proved as follows.

\begin{theorem}
Let $(V, W; \Sigma)$ be the genus two Heegaard splitting of the lens space $L = L(p, 1)$, $p \geq 2$.
Then there is a primitive triple of $V$ if and only if $p = 3$.
In this case, every primitive pair is contained in a unique primitive triple.
\label{thm:dimension}
\end{theorem}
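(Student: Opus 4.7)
The plan is to analyze the possibilities for a third primitive disk $F$ disjoint from a given primitive pair $\{D,E\}\subset V$, and show that such an $F$ exists precisely when $p=3$. Fix a primitive pair $\{D,E\}$ with common dual disk $E'$ (from Theorem~\ref{thm:common_dual}); let $E_0$ be the unique semiprimitive disk in $V$ disjoint from $E\cup E'$ and let $E'_0$ be the analogous disk in $W$. Setting $\pi_1(W)=\langle x,y\rangle$ with $x=\partial E'$ and $y=\partial E'_0$, we have $\partial E=x$, $\partial E_0=y^p$, and, after an orientation choice, $\partial D=xy^p$.

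Because $\{D,E\}$ cuts $V$ into a $3$-ball, the boundary of any disk $F$ in $V$ disjoint from $D\cup E$ is an essential simple closed curve in the $4$-holed sphere $\Sigma_V=\Sigma\setminus(\partial D\cup\partial E)$. Up to isotopy there are three non-$\partial$-parallel choices, corresponding to the three partitions of the four boundary circles into pairs. One such curve bounds a separating disk (never primitive); the remaining two bound non-separating disks $F_1$ and $F_2$ whose homology classes in $H_1(W)=\mathbb{Z}\langle x,y\rangle$ are $(2,p)$ and $(0,p)$ respectively. Since $(0,p)=p\cdot(0,1)$ is never primitive in $\mathbb{Z}^2$ for $p\ge 2$, the disk $F_2$ is not primitive, so any primitive triple containing $\{D,E\}$ must equal $\{D,E,F_1\}$.

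By Proposition~\ref{prop:generator}, a primitive element of $\pi_1(W)$ with abelianization $(2,p)$ and coherent signs has some cyclically reduced form $xy^a\,xy^b$ with $a+b=p$ and $a,b\ge 0$, and it is primitive exactly when $\{a,b\}=\{n,n+1\}$ for some $n\ge 0$. The crucial step is therefore to show that the actual boundary word of $F_1$ is $xyxy^{p-1}$, i.e.\ $(a,b)=(1,p-1)$. I would establish this by adapting the surgery-sequence argument in the proof of Theorem~\ref{thm:key}, in exactly the spirit of the proof of Theorem~\ref{thm:common_dual}: choose the dual disk $E'$ so as to minimise $|\partial F_1\cap\partial E_0|$, then run the sequence of Theorem~\ref{thm:key} with $F_1$ playing the role of the outermost subdisk $C$. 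The same case analysis forces $F_1$ to coincide with the disk $E_2$ produced by the sequence, whose boundary word is $(xy)\cdot xy^{p-1}=xyxy^{p-1}$. Granting this, Proposition~\ref{prop:generator} gives primitivity of $\partial F_1$ iff $\{1,p-1\}=\{n,n+1\}$, that is, iff $p=3$. For $p=3$ the disk $F_1$ is indeed primitive, yielding the unique primitive triple $\{D,E,F_1\}$ containing $\{D,E\}$, which establishes both the existence and uniqueness assertions.

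The main obstacle is the identification $F_1=E_2$ in the middle step. One has to verify that the surgery construction, now launched from the disjoint data $\{D,E\}$ rather than from an intersecting pair, produces a well-defined disk with the right partition behaviour in the $4$-holed sphere, and that this disk is the $(2,p)$-candidate $F_1$ rather than $F_2$. This amounts to a careful orientation and bookkeeping argument in the $2$-holed annulus pictures $\Sigma_j$ that appear in the proof of Theorem~\ref{thm:key}.
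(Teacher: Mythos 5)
There is a genuine gap, and it sits exactly where you flagged your ``main obstacle.'' Your enumeration step is false as stated: a $4$-holed sphere contains infinitely many isotopy classes of essential simple closed curves, not three. The three-fold count is only a count of the possible \emph{partitions} of the four boundary circles into pairs; within each partition there is an infinite family of curves (parametrized by how a connecting arc winds), and these give infinitely many pairwise non-isotopic disks in $V$ disjoint from $D\cup E$. In particular there are infinitely many candidates with the ``$F_1$'' partition type, and homology (your class $(2,p)$ computation) does not single one out, since winding changes the reduced word (introducing $y^{-1}$'s) and even the homology class. So the conclusion ``any primitive triple containing $\{D,E\}$ must equal $\{D,E,F_1\}$'' does not follow at that point, and the subsequent plan --- re-choosing $E'$ to minimize $|\partial F_1\cap\partial E_0|$ and rerunning the surgery sequence of Theorem \ref{thm:key} with $F_1$ in place of $C$ --- is both circular (you would be minimizing over the very disk whose existence and uniqueness you are trying to establish, and re-choosing $E'$ may destroy your normalization $\partial D = xy^p$) and left unexecuted: you explicitly defer the identification $F_1=E_2$, which is the entire content of the theorem.

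The paper closes this gap much more directly, with no surgery sequence at all. It observes that the boundary of any candidate disk disjoint from the pair, being essential and non-separating of the relevant partition type in the $4$-holed sphere $\Sigma_1$, is the frontier of a regular neighborhood of (boundary circle) $\cup$ (hole) $\cup$ (connecting arc $\alpha_1$); the infinite family of candidates corresponds to the choices of $\alpha_1$. If $\alpha_1$ meets the $p$ spanning arcs of $y\cap\Sigma_1$, the boundary word contains $yxy^{-1}$ and Lemma \ref{lem:criterion} kills primitivity; hence $\alpha_1$ must be disjoint from $y\cap\Sigma_1$, which pins it down uniquely up to isotopy, and the unique candidate then has word $xyxy^{p-1}$, primitive if and only if $p=3$. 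Your final step (reading primitivity of $xy^a x y^b$ off Proposition \ref{prop:generator}, forcing $\{1,p-1\}=\{n,n+1\}$) matches the paper's conclusion, so your proposal is salvageable: replace the three-curve count and the deferred surgery argument by the arc-parametrization of candidates together with the $yxy^{-1}$ exclusion from Lemma \ref{lem:criterion}.
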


\begin{proof}
Let $\{E, E_1\}$ be a primitive pair of $V$.
Choose a common dual disk $E'$ of $E$ and $E_1$ given by Theorem \ref{thm:common_dual}.
There are unique semiprimitive disks $E_0$ in $V$ and $E'_0$ in $W$s disjoint from $\Nbd(E \cup E')$.
Let $\Sigma_1$ be the $4$-holed sphere $\partial V$ cut off by $\partial E \cup \partial E_1$, and as in Figure \ref{sequence} again, consider $\Sigma_1$ as a $2$-holed annulus with two boundary circles came from $\partial E_1$ and two holes from $\partial E$.
We give symbols $x$ and $y$ to $\partial E'$ and $\partial E'_0$ respectively as in the proof of Theorem \ref{thm:key}.

The boundary of any primitive disk $E_2$ in $V$ disjoint from $E$ and $E_1$, if exists, lies in $\Sigma_1$, and it is the frontier of a regular neighborhood of the union of a boundary circle, a hole of $\Sigma_1$ and an arc $\alpha_1$ connecting them.
This arc is disjoint from the arcs $y \cap \Sigma_1$, otherwise $\partial E_2$ represents a word containing $yxy^{-1}$, that is, $E_2$ is not primitive.
Consequently, $\partial E_2$ is uniquely determined and it represents a word of the form $xyxy^{p-1}$, and so it is primitive if and only if $p=3$.
Thus, only when $p = 3$, we have the unique primitive triple $\{E, E_1, E_2\}$ containing the pair $\{E, E_1\}$.
\end{proof}

\begin{figure}
\labellist
\pinlabel $E'$ [B] at 352 123
\pinlabel $D'$ [B] at 218 123
\pinlabel $F'$ [B] at 191 73
\pinlabel $\partial E$ [B] at 503 110
\pinlabel $\partial D$ [B] at 503 80
\pinlabel $\partial F$ [B] at 503 50
\pinlabel {\Large$W$} [B] at 187 4
\endlabellist
\begin{center}
\includegraphics[width=0.8\textwidth]{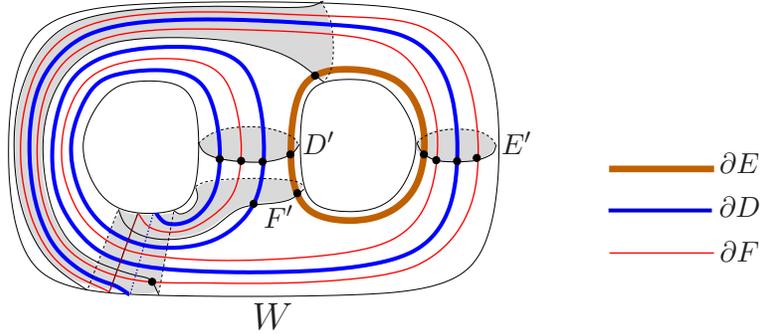}
\caption{The primitive triple $\{D', E', F'\}$ of $W$ in $L(3, 1)$ with the boundary circles $\partial D$, $\partial E$ and $\partial F$ of the disks in the primitive triple of $V$.}
\label{triple}
\end{center}
\end{figure}

\begin{remark}
For any primitive triple $\{D, E, F\}$ of $V$ in $L(3, 1)$, by Theorem \ref{common_dual}, there exist unique common dual disks $D'$, $E'$ and $F'$ of the disks in the pairs $\{E, F\}$, $\{F, D\}$ and $\{D, E\}$ respectively.
In fact, the disks $D'$, $E'$ and $F'$ form a primitive triple of $W$.
Furthermore, we have $|\partial D' \cap \partial D|=|\partial E' \cap \partial E|=|\partial F' \cap \partial F| = 2$.
Figure \ref{triple} illustrates the triple $\{D', E', F,\}$ of $W$ together with the boundary circles of $D$, $E$ and $F$ in $\partial W = \Sigma$.
\label{remark}
\end{remark}

\section{The complex of primitive disks}
\label{sec:primitive}

Let $M$ be an irreducible $3$-manifold with compressible boundary.
The {\it disk complex} of $M$ is a simplicial complex defined as follows.
The vertices of the disk complex are isotopy classes of essential disks in $M$, and a collection of $k+1$ vertices spans a $k$-simplex if and only if it admits a collection of representative disks which are pairwise disjoint.
In particular, if $M$ is a handlebody of genus $g \geq 2$, then the disk complex is $(3g - 4)$-dimensional and is not locally finite.
The following is a key property of a disk complex.

\begin{theorem}
If $\mathcal K$ is a full subcomplex of the disk complex satisfying the following condition, then $K$ is contractible.

\begin{itemize}
 \item Let $E$ and $D$ be disks in $M$ represent vertices of $\mathcal K$.
 If they intersect each other transversely and minimally, then at least one of the disks from surgery on $E$ along an outermost subdisk of $D$ cut off by $D \cap E$ represents a vertex of $\mathcal K$.
\end{itemize}
\label{thm:contractibility}
\end{theorem}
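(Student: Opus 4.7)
The plan is to adapt the standard surgery argument for contractibility of disk complexes (due to McCullough) to the subcomplex $\mathcal K$, using the stated hypothesis as the mechanism that keeps surgeries inside $\mathcal K$. Fix a base vertex $D_0 \in \mathcal K$. I will show by induction on an intersection complexity that every simplicial map $f : S^n \to \mathcal K$ (from a triangulated sphere) is nullhomotopic in $\mathcal K$; since $\mathcal K$ is a CW complex, this suffices for contractibility by Whitehead.

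By initial subdivision, I may assume $f$ is simplicial and that each representative disk $f(v)$ meets $D_0$ minimally. Define the complexity $c(f) = (N(f), k(f))$, with lexicographic order, where $N(f) = \max_v |f(v) \cap D_0|$ and $k(f) = \#\{v : |f(v) \cap D_0| = N(f)\}$. When $N(f) = 0$, every disk in the image is disjoint from $D_0$; then by fullness of $\mathcal K$, for every simplex $\sigma$ of the triangulation the disks $\{f(w) : w \in \sigma\} \cup \{D_0\}$ span a simplex of $\mathcal K$. Hence $f$ extends simplicially to the cone on $S^n$ with apex mapped to $D_0$, giving the desired nullhomotopy. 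This is the base case.

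For the inductive step, assume $N(f) > 0$ and pick a vertex $v$ realising $N(f)$. Write $D = f(v)$, let $C$ be an outermost subdisk of $D_0$ cut off by $D_0 \cap D$, and apply the hypothesis to the pair $(D, D_0)$: one of the two disks from surgery on $D$ along $C$, call it $D^*$, represents a vertex of $\mathcal K$, is disjoint from $D$, and satisfies $|D^* \cap D_0| < N(f)$. I now wish to homotope $f$ across the edge $\{D, D^*\}$ of $\mathcal K$, replacing $f(v) = D$ by $\tilde f(v) = D^*$ while leaving $f(w)$ unchanged for $w \neq v$. Such a homotopy exists provided that for each simplex $\{v, w_1, \ldots, w_k\}$ of the triangulation the set $\{D, D^*, f(w_1), \ldots, f(w_k)\}$ spans a simplex of $\mathcal K$, which by fullness reduces to checking that $D^*$ is disjoint from each $f(w_i)$.

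The main obstacle is precisely this last point: $D^*$ need not automatically be disjoint from the $f(w_i)$ in the link of $v$, and a naive replacement can push the image outside $\mathcal K$. The remedy is to nest two levels of induction: in an inner loop, for each simplex containing $v$, apply the surgery hypothesis iteratively to each $f(w_i)$ that meets $D^*$, reducing $|f(w_i) \cap D^*|$ until $D^*$ is disjoint from every vertex of the link of $v$, while keeping all intermediate disks inside $\mathcal K$; then perform the outer replacement of $D$ by $D^*$. Since each inner step strictly reduces a bounded nonnegative quantity and the outer complexity $c(f)$ strictly decreases after each successful replacement, the double induction terminates at a map with $N = 0$, to which the base case applies. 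This completes the nullhomotopy and hence the proof of contractibility.
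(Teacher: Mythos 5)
Your outer framework is the right one, and it is essentially the argument the paper points to: the paper itself gives no written proof of this theorem but defers to \cite{C} (an argument in the style of McCullough \cite{McC}), where contractibility is proved by exactly this scheme --- kill $\pi_n$ of the subcomplex by taking a simplicial $f \colon S^n \to \mathcal K$, measure complexity by intersection with a fixed base vertex $D_0$, cone into the star of $D_0$ when the complexity is zero, and use the surgery hypothesis to reduce it otherwise. The gap is at the step you yourself flag, and your proposed remedy does not close it. Replacing $f(w_i)$ by a disk $f(w_i)'$ obtained from surgery along an outermost subdisk of $D^*$ is itself a modification of the sphere map, and it is legitimate only if $f(w_i)'$ is disjoint from every disk in the link of $w_i$ --- including vertices of simplices that do \emph{not} contain $v$. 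Nothing in your inner loop ensures this, so the obstruction is not removed but reproduced at each inner vertex, and the recursion has no reason to terminate. Worse, no complexity controls it: $f(w_i)'$ is assembled from a piece of $f(w_i)$ and a piece of $D^*$ (hence pieces of $C \subset D_0$ and of $D$), so $|f(w_i)' \cap D_0|$ is uncontrolled and can exceed $N(f)$; that is, the inner moves can strictly increase your outer complexity $c(f)$, and they can also create new intersections among image disks. The claim that ``each inner step strictly reduces a bounded nonnegative quantity'' while ``$c(f)$ strictly decreases after each successful replacement'' is precisely what is not established.

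The standard repair, and the one used in the cited proof, removes the inner loop entirely by choosing the surgery arc globally rather than per vertex: among all arcs of $D_0 \cap \bigl(\bigcup_w f(w)\bigr)$, pick one, say $\beta \subset D_0 \cap f(v)$, that is outermost in $D_0$ among the arcs of intersection of $D_0$ with \emph{all} image disks. The subdisk $C$ of $D_0$ cut off by $\beta$ then meets no disk $f(w)$ at all, and it is in particular an outermost subdisk of $D_0$ cut off by $D_0 \cap f(v)$, so the hypothesis applies with $E = f(v)$ and $D = D_0$. Each disk from surgery on $f(v)$ along $C$ is disjoint both from $f(v)$ and from every $f(w)$ with $w$ in the link of $v$, since each such $f(w)$ is disjoint from $f(v)$ and from $C$. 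The hypothesis guarantees one of the two surgered disks is a vertex of $\mathcal K$, fullness makes the replacement across the star of $v$ simplicial into $\mathcal K$, and the total complexity $\sum_w |f(w) \cap D_0|$ strictly decreases, so the induction closes with no nested loop. (Two smaller points you should also record: minimal position must be arranged simultaneously for all image disks against $D_0$, and the base-case cone to $D_0$ needs simultaneously disjoint representatives for each simplex together with $D_0$, which is the standard innermost-disk/outermost-arc argument for disk systems.)
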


In \cite{C}, the above theorem is proved in the case that $M$ is a handlebody, but the proof is still valid for an arbitrary irreducible manifold with compressible boundary. From the theorem, we see that the disk complex itself is contractible.

Now consider the genus two Heegaard splitting $(V, W; \Sigma)$ of a lens space $L(p, 1)$, $p \geq 2$.
We define the {\it primitive disk complex}, denoted by $\mathcal P(V)$, to be the full subcomplex of the disk complex spanned by the vertices of primitive disks in $V$.
We already know that every primitive disk is a member of infinitely many primitive pair, and so every vertex of $\mathcal P(V)$ has infinite valency.
The following is our main theorem, which is a direct consequence of Theorems \ref{thm:key}, \ref{thm:dimension} and \ref{thm:contractibility}.

\begin{theorem}
Let $(V, W; \Sigma)$ be the genus two Heegaard splitting of the lens space $L = L(p, 1)$, $p \geq 2$.
The primitive disk complex $\mathcal P(V)$ is contractible.
In particular, if $p \neq 3$, it is a tree, and if $p = 3$, it is $2$-dimensional, and every edge is contained in a unique $2$-simplex.
\label{thm:main}
\end{theorem}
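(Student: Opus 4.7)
The plan is to apply Theorem~\ref{thm:contractibility} to the primitive disk complex and then use Theorem~\ref{thm:dimension} to pin down its dimension; both inputs are already in hand, so the argument is essentially a check that the hypotheses line up. First I would verify the surgery hypothesis of Theorem~\ref{thm:contractibility} with $\mathcal K = \mathcal P(V)$: given two primitive disks $D$ and $E$ in $V$ meeting transversely and minimally, Theorem~\ref{thm:key} produces, for any outermost subdisk of $D$ cut off by $D \cap E$, one of the two disks resulting from surgery on $E$ along that subdisk which is again primitive (in fact with a common dual disk with $E$). This is exactly the condition required, so Theorem~\ref{thm:contractibility} yields contractibility of $\mathcal P(V)$ immediately.

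Next I would determine the dimension. Since $V$ is a handlebody of genus $g = 2$, its full disk complex has dimension $3g - 4 = 2$, and hence the full subcomplex $\mathcal P(V)$ has dimension at most $2$. Theorem~\ref{thm:dimension} characterises when the top dimension is realised: a primitive triple in $V$ exists if and only if $p = 3$, in which case every primitive pair is contained in a unique primitive triple. For $p \neq 3$ this means $\mathcal P(V)$ contains no $2$-simplex, so it is a $1$-dimensional complex, i.e.\ a graph, and a contractible graph is a tree. For $p = 3$ the complex $\mathcal P(V)$ is genuinely $2$-dimensional, and the uniqueness clause of Theorem~\ref{thm:dimension} translates directly into the statement that every edge lies in exactly one $2$-simplex. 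No step presents a real obstacle; the only observation needed beyond citing the three earlier results is the a priori dimension bound coming from the genus of $V$.
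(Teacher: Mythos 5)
Your proposal is correct and follows exactly the paper's route: the paper derives this theorem directly from Theorems~\ref{thm:key}, \ref{thm:dimension} and \ref{thm:contractibility}, with Theorem~\ref{thm:key} supplying the surgery condition needed for Theorem~\ref{thm:contractibility} and Theorem~\ref{thm:dimension} settling the dimension and the unique-triple statement. Your only addition, the a priori bound $3g-4=2$ on the dimension of the disk complex for $g=2$, is stated earlier in the paper and is the right way to make the dimension count explicit.
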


Figure \ref{complex} illustrates portions of the primitive disk complexes. The black vertices are the vertices of $\mathcal P(V)$ while the white ones the barycenters of edges when $p \neq 3$ and of $2$-simplices when $p = 3$.
Observe that the $2$-dimensional $\mathcal P(V)$ deformation retracts to a tree in its barycentric subdivision, as in the figure.

\begin{figure}
\labellist
\pinlabel (a) [B] at 116 0
\pinlabel (b) [B] at 401 0
\endlabellist
\begin{center}
\includegraphics[width=0.9\textwidth]{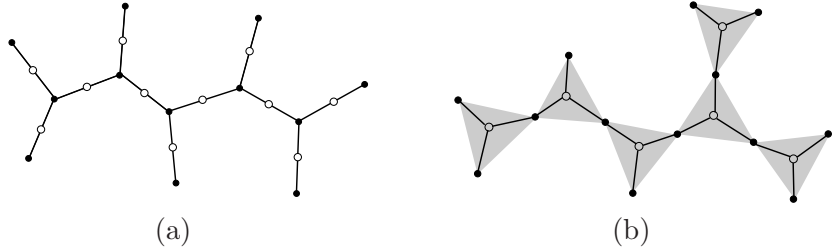}
\caption{Small portions of primitive disk complexes $\mathcal P(V)$ for $p \neq 3$ (a) and $p = 3$ (b).}
\label{complex}
\end{center}
\end{figure}

\section{Genus two Goeritz groups of lens spaces $L(p, 1)$}
\label{sec:presentations}
In this section, we give explicit presentation of the genus two Goeritz group $\mathcal G$ of each lens space $L(p, 1)$.
From Theorem \ref{thm:contractibility}, if  $p \neq 3$, the primitive disk complex $\mathcal P(V)$ is a tree, and if $p = 3$, then $\mathcal P(V)$ is $2$-dimensional but deformation retracts to a tree.
We simply denote by $\mathcal T$ the barycentric subdivision of the tree $\mathcal P(V)$ if $p \neq 3$ and of the deformation retract of $\mathcal P(V)$ if $p = 3$.
Each of the trees $\mathcal T$ is bipartite, as in Figure \ref{complex}, with the black vertices of (countably) infinite valence, and the white vertices of valence $2$ if $p \neq 3$ and of valence $3$ if $p = 3$.

Each black vertex of $\mathcal T$ is represented by a primitive disk, while each white vertex is represented by a primitive pair if $p \neq 3$, and by a primitive triple if $p = 3$.
An element of the group $\mathcal G$ can be considered as a simplicial automorphism of $\mathcal T$.
The tree $\mathcal T$ is invariant under the action of $\mathcal G$ for each $L(p, 1)$.
In particular, $\mathcal G$ acts transitively on each of the set of black vertices and the set of white vertices, and hence the quotient of $\mathcal T$ by the action of $\mathcal G$ is a single edge of which one end vertex is black and another one white.
Thus, by the theory of groups acting on trees due to Bass and Serre \cite{S}, the group $\mathcal G$ can be expressed as the free product of the stabilizer subgroups of two end vertices with amalgamated stabilizer subgroup of the edge.

First, we find a presentation of the stabilizer subgroup of a black vertex of $\mathcal T$, that is, of (the isotopy class of) a primitive disk in $V$.
For convenience, we will not distinguish disks (pairs and triples of disks) and homeomorphisms from their isotopy classes in their notations.
Throughout the section, $\mathcal G_{\{A_1, A_2, \cdots, A_k\}}$ will denote the subgroup of $\mathcal G$ of elements preserving each of $A_1, A_2, \cdots, A_k$ setwise, where $A_i$ will be (isotopy classes of) disks or union of disks in $V$ or in $W$.

\begin{lemma}
Let $E$ be a primitive disk in $V$.
The the stabilizer subgroup $\mathcal G_{\{E\}}$ of $E$ has the presentation
$\langle ~\alpha ~|~ \alpha^2 =1~\rangle \oplus \langle ~\beta, \gamma ~|~ \gamma^2 = 1 ~ \rangle,$
where the generators $\alpha, \beta$ and $\gamma$ are described in Figure \ref{disk_stabilizer}.
\label{lem:black_stabilizer}
\par
\end{lemma}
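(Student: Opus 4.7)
The plan is to cut $V$ open along the primitive disk $E$, reduce $\mathcal{G}_{\{E\}}$ to the group of self-homeomorphisms of the resulting simpler manifold pair, and then identify the three generators $\alpha$, $\beta$, $\gamma$ with concrete such homeomorphisms in order to verify the presentation directly. Cutting $V$ along $E$ yields a solid torus $V_E$ whose boundary contains two distinguished disks $E^+$, $E^-$ coming from the two sides of $E$; regluing $V_E$ to $W$ along $\Sigma \setminus \Nbd(\partial E)$ recovers $L(p,1)$, and any element of $\mathcal{G}_{\{E\}}$ lifts to a self-homeomorphism of this cut open manifold preserving $V_E$, $W$, and the unordered pair $\{E^+, E^-\}$. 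From Figure \ref{disk_stabilizer} I expect to read off $\alpha$ as a hyperelliptic-type involution of the genus two splitting that fixes $E$ setwise and lies in the centre of $\mathcal{G}_{\{E\}}$, $\gamma$ as an involution interchanging the two sides of $E$ while fixing a chosen dual disk of $E$ in $W$, and $\beta$ as an infinite order element obtained by dragging $E$ around a loop inside a solid torus lying in $W$. The orders $\alpha^2 = \gamma^2 = 1$ will then be immediate, while the commutation of $\alpha$ with $\beta$ and $\gamma$ will be verified by presenting $\alpha$ as the extension to the genus two splitting of the standard involution $-\mathrm{Id}$ of a Heegaard torus of the underlying genus one splitting.

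The next step is to prove that these three elements generate $\mathcal{G}_{\{E\}}$. Given $f \in \mathcal{G}_{\{E\}}$, Theorem \ref{thm:common_dual} provides a canonical common dual disk of $E$ (for $p \geq 3$) or a canonical pair of them (for $p = 2$); after composing $f$ with an appropriate word in $\beta$ and $\gamma$ I may assume that $f$ fixes a chosen dual disk $E'$. The remaining freedom is then the stabilizer of the primitive pair $\{E, E'\}$, and this pair determines the unique semiprimitive disks $E_0$ in $V$ and $E'_0$ in $W$; thus $f$ preserves the full system $\{E, E', E_0, E'_0\}$ and is therefore determined up to $\alpha$ by its restriction to the four holed sphere $\Sigma'$ obtained by cutting $\Sigma$ along $\partial E' \cup \partial E'_0$, which one checks admits only the trivial mapping class up to this involution.

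The final, and most delicate, step is to rule out relations beyond $\alpha^2 = \gamma^2 = 1$ and the commutations $[\alpha,\beta] = [\alpha,\gamma] = 1$. The plan is to exhibit a faithful action of the abstract group $\langle\alpha \mid \alpha^2\rangle \oplus \langle\beta,\gamma \mid \gamma^2\rangle$ on a combinatorial object attached to $E$: specifically, the link of $E$ in the tree $\mathcal{T}$ of Section \ref{sec:primitive}, enriched by the choice of a dual disk at each incident primitive pair. The factor $\langle\beta,\gamma \mid \gamma^2\rangle$ should act as the full group of simplicial automorphisms of a bipartite subtree of valences $\infty$ and $2$, which is exactly the Bass--Serre tree of $\mathbb{Z}\ast\mathbb{Z}/2$, while $\alpha$ will act trivially on this enriched link. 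The main obstacle will be to verify that the enriched link is genuinely a tree and that $\beta$ together with $\gamma$ acts simply transitively on adjacent edges of the appropriate type; once this is in place, a standard Bass--Serre normal form argument forces injectivity of the map from the abstract group and delivers the claimed presentation.
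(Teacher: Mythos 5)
Your overall shape (identify $\alpha,\beta,\gamma$, prove generation, rule out extra relations via an action on a tree) matches the paper's, but the load-bearing step is exactly the one you defer as "the main obstacle": proving that the relevant complex is a tree. The paper resolves this by running its surgery machinery a second time, inside $W$: it forms $\mathcal P'(W)$, the full subcomplex of the primitive disk complex of $W$ spanned by the \emph{dual disks of $E$}, notes that every dual disk of $E$ is disjoint from the unique semiprimitive disk $E'_0$ in $W$, and shows that when two dual disks of $E$ intersect, surgery along an outermost subdisk yields $E'_0$ and another dual disk of $E$; Theorem \ref{thm:contractibility} then makes $\mathcal P'(W)$ a contractible $1$-complex, i.e.\ a tree, whose barycentric subdivision $\mathcal T'$ has quotient a single edge under $\mathcal G_{\{E\}}$. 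Bass--Serre applied to this action gives $\mathcal G_{\{E\}} \cong \mathcal G_{\{E,E'\}} \ast_{\mathcal G_{\{E,E',D'\}}} \mathcal G_{\{E,E'\cup D'\}}$ in one stroke, which simultaneously delivers generation, the transitivity you assert without proof ("compose with a word in $\beta,\gamma$ so that $f$ fixes $E'$"), and the absence of further relations. You supply no mechanism for any of this; moreover your candidate object, the link of $E$ in $\mathcal T$ enriched by common duals, is built from primitive pairs $\{E,D\}$ in $V$, whereas the bipartite $(\infty,2)$ tree actually needed has as vertices the dual disks of $E$ in $W$ and the primitive pairs of such duals --- the two objects are not obviously isomorphic, and no argument is offered that yours is even connected, let alone a tree.

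Beyond the missing idea, three concrete claims are wrong. First, the stabilizer of the system $\{E,E',E_0,E'_0\}$ is not "trivial up to $\alpha$": it is $\langle\,\beta\,\rangle \oplus \langle\,\alpha\,\rangle \cong \mathbb Z \oplus \mathbb Z_2$; in the paper $\mathcal G_{\{E,E'\}} = \langle\, \beta,\beta' \mid (\beta\beta')^2=1,\ \beta\beta'=\beta'\beta \,\rangle$ with $\alpha = \beta\beta'$, and this is precisely where the infinite-order generator $\beta$ and the free factor $\mathbb Z$ come from --- your version would make $\mathcal G_{\{E\}}$ generated by torsion elements fixing $E'$, which is incompatible with the presentation being proved. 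Second, $\langle\,\beta,\gamma \mid \gamma^2\,\rangle$ cannot act as the \emph{full} group of simplicial automorphisms of a biregular $(\infty,2)$ tree: that automorphism group is uncountable; what holds, and what you need, is an action with quotient a single edge, vertex stabilizers generated by (the images of) $\beta$ and $\gamma$, and trivial edge stabilizer modulo the central $\alpha$. Third, the action you describe cannot be faithful for the whole abstract group if $\alpha$ acts trivially on the enriched link; $\alpha$ must be handled as a central $\mathbb Z_2$ kernel whose extension splits --- the paper avoids this issue entirely by keeping $\alpha$ inside all three stabilizers and amalgamating over $\langle\, \alpha \mid \alpha^2=1 \,\rangle$, so that the relation $\beta\beta'=\alpha$ assembles the amalgam directly into $\langle\,\alpha \mid \alpha^2\,\rangle \oplus \langle\,\beta,\gamma \mid \gamma^2\,\rangle$.
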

\begin{figure}
\labellist
\pinlabel (a) [B] at 100 -7
\pinlabel (b) [B] at 284 -7
\pinlabel {\small$\pi$} [B] at -5 74
\pinlabel {\small$\pi$} [B] at 177 74
\pinlabel {\small$\pi$} [B] at 386 77
\pinlabel {\small$\pi$} [B] at 327 -7
\pinlabel {\small$\beta'$} [B] at 2 60
\pinlabel {\small$\beta$} [B] at 167 60
\pinlabel {\small$\alpha$} [B] at 375 64
\pinlabel {\small$\gamma$} [B] at 340 5
\pinlabel {\small$E'_0$} [B] at 29 88
\pinlabel {\small$E'$} [B] at 137 88
\pinlabel {\small$E'_0$} [B] at 227 86
\pinlabel {\small$D'$} [B] at 306 84
\pinlabel {\small$E'$} [B] at 349 85
\pinlabel {\small$\partial E_0$} [B] at 63 37
\pinlabel {\small$\partial E$} [B] at 113 55
\pinlabel {\small$\partial E$} [B] at 337 57
\pinlabel {\large$W$} [B] at 165 27
\pinlabel {\large$W$} [B] at 357 27
\endlabellist
\begin{center}
\includegraphics[width=0.9\textwidth]{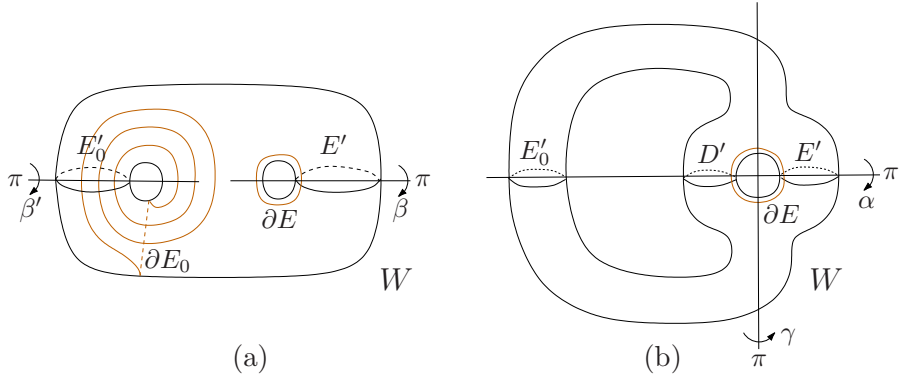}
\caption{Generators of the stabilizer subgroup $\mathcal G_{\{E\}}$.}
\label{disk_stabilizer}
\end{center}
\end{figure}
\begin{proof}
Let $\mathcal P'(W)$ be the full subcomplex of the primitive disk complex $\mathcal P(W)$ for $W$ spanned by the vertices of dual disks of $E$.
There is a unique semiprimitive disk $E'_0$ in $W$ disjoint from $\partial E$, and it is easy to show that any dual disk of $E$ is disjoint from $E'_0$.
Thus $\mathcal P'(W)$ is $1$-dimensional and further, by a similar argument to the case of $\mathcal P(V)$, we have that $\mathcal P'(W)$ is a tree whose vertices have infinite valence.
That is, when two dual disks of $E$ intersect each other, one of the two disks from surgery construction is $E'_0$ and the other one is again a dual disk of $E$.
Denote by $\mathcal T'$ the barycentric subdivision of $\mathcal P'(W)$.
The tree $\mathcal T'$ is invariant under the action of the stabilizer subgroup $\mathcal G_{\{E\}}$, and the quotient of $\mathcal T'$ by the action is a single edge.
One vertex of this edge corresponds to a dual disk $E'$ of $E$, and the other one to a primitive pair $\{E', D'\}$ of dual disks of $E$.
Thus $\mathcal G_{\{E\}}$ can be expressed as the free product of stabilizer subgroups $\mathcal G_{\{E, E'\}} \ast \mathcal G_{\{E, E' \cup D'\}}$ amalgamated by $\mathcal G_{\{E, E', D'\}}$.

Consider the subgroup $\mathcal G_{\{E, E'\}}$ first.
Any element of $\mathcal G_{\{E, E'\}}$ also preserves the disks $E_0$ and $E'_0$, which are unique meridian disks disjoint from $E \cup E'$ in $V$ and in $W$ respectively.
Since $V$ cut off by $E \cup E_0$ and $W$ cut off by $E' \cup E'_0$ are all $3$-balls, the group $\mathcal G_{\{E, E'\}}$ is identified with the group of isotopy classes of orientation preserving homeomorphisms of $\Sigma = \partial V = \partial W$ which preserve each of $\partial E$, $\partial E'$, $\partial E_0$ and $\partial E'_0$.
This group has a presentation $\langle ~\beta, \beta'~|~ (\beta \beta')^2 = 1, \beta\beta' = \beta'\beta ~\rangle$, where the generators $\beta$ and $\beta'$ are $\pi$-rotations (half Dehn twists) described in Figure \ref{disk_stabilizer} (a).

Next, consider the subgroup $\mathcal G_{\{E, E' \cup D'\}}$.
Any element of this group preserves $E' \cup D'$ in $W$, and further it preserves $E$ and $E_0 \cup D_0$ in $V$ where $E_0$ and $D_0$ are unique meridian disks in $V$ disjoint from $E \cup E'$ and $E \cup D'$ respectively.
Thus $\mathcal G_{\{E, E' \cup D'\}}$ is generated by two elements $\alpha$ and $\gamma$, where $\alpha$ is the hyperelliptic involution, and $\gamma$ is the element of order $2$ exchanging $E'$ and $D'$ described in Figure \ref{disk_stabilizer} (b).
Thus $\mathcal G_{\{E, E' \cup D'\}}$ has the presentation $\langle ~\alpha ~|~ \alpha^2 = 1 ~\rangle \oplus \langle~ \gamma ~|~ \gamma^2 = 1 ~\rangle$.
Similarly, $\mathcal G_{\{E, E', D'\}}$ has the presentation  $\langle ~\alpha ~|~ \alpha^2 = 1  ~\rangle$.
Observing that $\alpha$ satisfies  $\beta \beta' = \alpha$, we have the desired presentation of $\mathcal G_{\{E\}}$.
\end{proof}

Thus we have the same stabilizer subgroup of a black vertex for each $L(p, 1)$, $p \geq 2$, but for a white vertex, we have the following cases depending on $p$.

\begin{lemma}
A white vertex of $\mathcal T$ corresponds to a primitive pair if $p \neq 3$ and to a primitive triple if $p = 3$.
\begin{enumerate}
\item Let $\{D, E\}$ be a primitive pair of $V$ in $L(p, 1)$.
Then the stabilizer subgroup $\mathcal G_{\{D \cup E\}}$ has the presentation
$\langle ~\rho, \gamma ~|~ \rho^4 = \gamma^2 = (\rho \gamma )^2 = 1 ~ \rangle$ if $p=2$, and $\langle ~\alpha ~|~ \alpha^2 = 1 ~\rangle \oplus \langle ~\sigma ~|~ \sigma^2 = 1 ~ \rangle$ if $p\geq 3$, where the generators are described in Figure \ref{pair_stabilizer}.
\item Let $\{D, E, F\}$ be a primitive triple of $V$ in $L(3, 1)$.
Then the stabilizer subgroup $\mathcal G_{\{D \cup E \cup F\}}$ has the presentation
$\langle ~\alpha ~|~\alpha^2 = 1~ \rangle \oplus \langle ~ \delta, \gamma ~|~ \delta^3 = \gamma^2 = (\gamma \delta)^2 = 1 ~  \rangle$, where the generators are described in Figure \ref{triple_stabilizer}.
\par
\end{enumerate}
\label{lem:white_stabilizer}
\end{lemma}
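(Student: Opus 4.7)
The plan is to parallel the proof of Lemma \ref{lem:black_stabilizer}: in each case I would pin down a canonical collection of disks on both sides of $\Sigma$ that every element of the stabilizer must preserve (possibly permuting some subsets), then reduce the question to the mapping class group of the planar surface obtained by cutting $\Sigma$ open along this canonical system, and read off the presentation from the combinatorics.

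For part (1) with $p\geq 3$, let $E'$ be the unique common dual disk of $\{D,E\}$ supplied by Theorem \ref{thm:common_dual}, and let $E'_0\subset W$ be the unique semiprimitive disk disjoint from $\partial E\cup\partial E'$ (unique exactly as in the proof of Theorem \ref{thm:common_dual}). The subgroup of $\mathcal G_{\{D\cup E\}}$ fixing each of $D$ and $E$ individually must fix $E'$, hence must fix $E'_0$. Since $D\cup E$ cuts $V$ into a $3$-ball and $E'\cup E'_0$ cuts $W$ into a $3$-ball, such an element is determined by its action on $\Sigma$ preserving the four curves $\partial D,\partial E,\partial E',\partial E'_0$; direct inspection of this planar configuration (a refinement of Figure \ref{common_dual}(b) with $p-1$ parallel arcs of $\partial D$) shows the only non-trivial such element is the hyperelliptic involution $\alpha$. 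Adjoining an involution $\sigma$ swapping $D\leftrightarrow E$ (which exists by the evident symmetry of Figure \ref{pair_stabilizer} and commutes with the central $\alpha$) then gives the full stabilizer $\langle\alpha\rangle\oplus\langle\sigma\rangle\cong\mathbb Z/2\oplus\mathbb Z/2$.

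For part (1) with $p=2$, Theorem \ref{thm:common_dual} instead yields two common dual disks $\{D',E'\}$ forming a primitive pair in $W$. Both pairs $\{D,E\}\subset V$ and $\{D',E'\}\subset W$ are therefore preserved setwise by every stabilizer element, and each pair cuts its handlebody into a $3$-ball, so the stabilizer embeds into the mapping class group of $\Sigma$ with the four curves $\partial D,\partial E,\partial D',\partial E'$ marked. The subgroup fixing each of the four curves individually is again $\langle\alpha\rangle$ (by cutting $\Sigma$ open as above), while the image in the permutation group on the four curves is now the full $\mathbb Z/2\oplus\mathbb Z/2$ since the two pair-swaps can be realised independently in this extra-symmetric situation (Figure \ref{common_dual}(b) with $p=2$). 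This produces a group of order $8$; taking $\rho$ to be the order-$4$ element satisfying $\rho^2=\alpha$ which swaps both pairs simultaneously and $\gamma$ to be an involution swapping only $\{D',E'\}$ verifies the relations $\rho^4=\gamma^2=(\rho\gamma)^2=1$ of the dihedral group $D_4$.

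For part (2), Remark \ref{remark} supplies a canonical dual primitive triple $\{D',E',F'\}\subset W$, so $\mathcal G_{\{D\cup E\cup F\}}$ acts on the three pairs $(D,D'),(E,E'),(F,F')$, yielding a homomorphism to $S_3$. Surjectivity is witnessed by the generators $\delta$ and $\gamma$ of Figure \ref{triple_stabilizer}. The kernel consists of elements fixing every one of the six disks of the combined triples; cutting $\Sigma$ along all six boundary curves produces a planar surface whose only non-trivial orientation-preserving symmetry respecting the marking is $\alpha$. Since $\alpha$ is central, the short exact sequence $1\to\langle\alpha\rangle\to\mathcal G_{\{D\cup E\cup F\}}\to S_3\to 1$ splits as a direct product, giving the stated presentation of $\mathbb Z/2\oplus S_3$. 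The main obstacle in each case is the kernel computation — ruling out hidden Dehn twists along preserved curves — which is exactly where reducing to the planar cut-open surface is essential, because the mapping class group of a sphere with finitely many marked holes is finite and can be enumerated directly.
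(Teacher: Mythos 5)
Your overall strategy is the same as the paper's: use Theorem \ref{thm:common_dual} (and Remark \ref{remark}) to produce the canonical dual configuration in $W$, observe that the stabilizer must preserve it, and reduce to symmetries of $\Sigma$ cut open along the resulting complete systems, which yield finite groups. Your part (1) arguments, for both $p=2$ and $p\geq 3$, are in substance the paper's proof (the paper identifies $\mathcal G_{\{D\cup E\}}$ with $\mathcal G_{\{D\cup E, D'\cup E'\}}$, respectively $\mathcal G_{\{D\cup E, E'\}}$, and reads off the dihedral group of order $8$, respectively $\langle\alpha\rangle\oplus\langle\sigma\rangle$, from the cut-open balls), and your level of detail in the kernel computations matches the paper's.

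There is, however, a genuine gap in your part (2): the inference ``since $\alpha$ is central, the short exact sequence $1\to\langle\alpha\rangle\to\mathcal G_{\{D\cup E\cup F\}}\to S_3\to 1$ splits as a direct product'' is false as a general principle. A central extension of $S_3$ by $\mathbb Z/2$ need not split: the dicyclic group of order $12$ is a nonsplit central extension of $S_3$ by $\mathbb Z/2$. Centrality gives you a direct product only \emph{after} you have exhibited a section, i.e.\ lifts of the generators of $S_3$ satisfying the $S_3$ relations --- in particular you must verify $(\gamma\delta)^2=1$ upstairs, which you never do. The paper closes exactly this point by working with $\gamma'=\alpha\gamma$ instead of $\gamma$: the element $\gamma$ exchanges the two $3$-balls cut off by $D'\cup E'\cup F'$ (so your proposed ``planar cut-open'' analysis cannot be carried out on a single ball with $\gamma$ in play), whereas $\delta$ and $\gamma'$ both preserve the ball $B$ and are shown to generate the dihedral group $D_6$ of symmetries of $B$ preserving $D'\cup E'\cup F'$ and $(\partial D\cup\partial E\cup\partial F)\cap\partial B$; this $D_6\cong S_3$ is the required complement, the relation $(\gamma'\delta)^2=1$ is then equivalent to $(\gamma\delta)^2=1$, and only at that stage does centrality of $\alpha$ yield $\langle\alpha\rangle\oplus\langle\delta,\gamma\rangle$. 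Your proof becomes complete once you replace the splitting claim by this verification (or by any direct check of the lifted relations among $\delta$ and $\gamma$).
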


\begin{figure}
\labellist
\pinlabel (a) [B] at 77 -9
\pinlabel (b) [B] at 262 -9
\pinlabel (c) [B] at 449 -9
\pinlabel {\small$\pi$} [B] at 178 93
\pinlabel {\small$\pi$} [B] at 119 12
\pinlabel {\small$\pi$} [B] at 263 12
\pinlabel {\small$\pi$} [B] at 486 12
\pinlabel {\small$\pi$} [B] at 545 93
\pinlabel {\scriptsize$\frac{\pi}{4}$} [B] at 322 141
\pinlabel {\scriptsize$\frac{\pi}{2}$} [B] at 304 35
\pinlabel {\small$\gamma$} [B] at 132 27
\pinlabel {\small$\alpha$} [B] at 165 80
\pinlabel {\small$\gamma$} [B] at 276 25
\pinlabel {\small$\rho$} [B] at 312 159
\pinlabel {\small$\alpha$} [B] at 323 41
\pinlabel {\small$\sigma$} [B] at 499 23
\pinlabel {\small$\alpha$} [B] at 532 81

\pinlabel {\scriptsize$D'$} [B] at 100 80
\pinlabel {\scriptsize$E'$} [B] at 140 80
\pinlabel {\scriptsize$E'$} [B] at 232 118
\pinlabel {\scriptsize$D'$} [B] at 232 54
\pinlabel {\scriptsize$D'$} [B] at 295 118
\pinlabel {\scriptsize$E'$} [B] at 295 54
\pinlabel {\scriptsize$D$} [B] at 465 80
\pinlabel {\scriptsize$E$} [B] at 509 80

\pinlabel {\scriptsize$\partial E$} [B] at 117 113
\pinlabel {\scriptsize$\partial D$} [B] at 25 75
\pinlabel {\scriptsize$\partial E$} [B] at 261 120
\pinlabel {\scriptsize$\partial E$} [B] at 261 50
\pinlabel {\scriptsize$\partial D$} [B] at 231 86
\pinlabel {\scriptsize$\partial D$} [B] at 295 86
\pinlabel {\scriptsize$\partial E'$} [B] at 486 112

\pinlabel {\large$W$} [B] at 155 48
\pinlabel {\large$V$} [B] at 520 48
\endlabellist
\begin{center}
\includegraphics[width=1.0\textwidth]{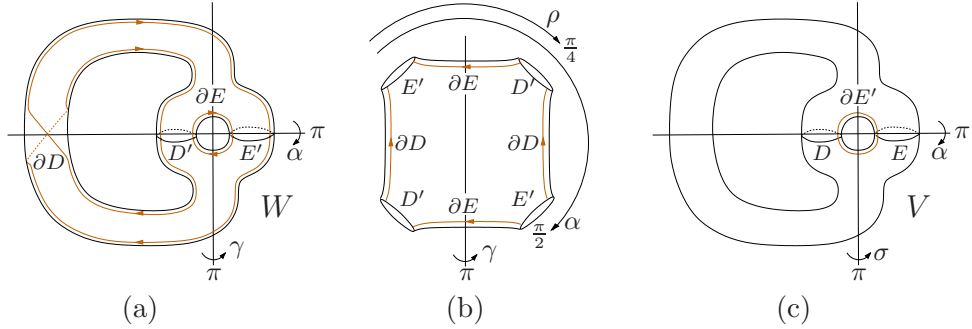}
\caption{Generators of the stabilizer subgroup $\mathcal G_{\{D \cup E\}}$ for $L(2, 1)$ in (a) and (b), and for $L(p, 1)$ with $p \geq 3$ in (c).}
\label{pair_stabilizer}
\end{center}
\end{figure}

\begin{proof}
\noindent(1)
First, let $\{D, E\}$ be a primitive pair of $V$ in $L(2, 1)$.
Then, by Theorem \ref{thm:common_dual}, there is a unique primitive pair $\{D', E'\}$ of $W$ such that each of $D'$ and $E'$ is a common dual disk of $D$ and $E$.
Any element of $\mathcal G_{\{D \cup E\}}$ preserves $D' \cup E'$, and hence $\mathcal G_{\{D \cup E\}}$ is identified with the stabilizer subgroup $\mathcal G_{\{D \cup E, D' \cup E'\}}$.
Since $D \cup E$ and $D' \cup E'$ cut off $V$ and $W$ into $3$-balls, the group $\mathcal G_{\{D \cup E, D' \cup E'\}}$ is identified with the group of isotopy classes of orientation preserving homeomorphisms of $\Sigma = \partial V = \partial W$ which preserve each of $\partial D \cup \partial E$ and $\partial D' \cup \partial E'$.
This is the dihedral group $D_8$ of order $8$ with generators $\rho$ and $\gamma$, described in Figure \ref{pair_stabilizer} (a) and (b).
The $3$-ball in Figure \ref{pair_stabilizer} (b) is $W$ cut off by $D' \cup E'$.
Figures \ref{pair_stabilizer} (a) and \ref{pair_stabilizer} (b) give two descriptions of each of the elements $\alpha$ and $\gamma$.
Thus we have the presentation $\langle ~\rho, \gamma ~|~ \rho^4 = \gamma^2 = (\rho \gamma )^2 = 1 ~ \rangle$.
We remark that the hyperelliptic involution $\alpha$ equals $\rho^2$.

Next, let $\{D, E\}$ be a primitive pair of $V$ in $L(p, 1)$ with $p \geq 3$.
There is a unique common dual disk $E'$ of $D$ and $E$ by Theorem \ref{thm:common_dual}, and hence $\mathcal G_{\{D \cup E\}}$ is identified with the stabilizer subgroup $\mathcal G_{\{D \cup E, E'\}}$.
As in the case of $\mathcal G_{\{E, E' \cup  D'\}}$ in the proof of Lemma \ref{lem:black_stabilizer}, this group is generated by two elements.
One is the hyperelliptic involution $\alpha$, and the other one is the element, denoted by $\sigma$, of order $2$ exchanging $D$ and $E$ described in Figure \ref{pair_stabilizer} (c).
Thus we have the presentation $\langle ~\alpha ~ |~ \alpha^2 = 1 ~\rangle \oplus \langle ~ \sigma ~|~  \sigma^2 = 1 ~ \rangle$.

\begin{figure}
\labellist
\pinlabel {\small $E'$} [B] at 195 108
\pinlabel {\small $D'$} [B] at 133 133
\pinlabel {\small $F'$} [B] at 178 91
\pinlabel {\small $\partial E$} [B] at 526 110
\pinlabel {\small $\partial D$} [B] at 526 91
\pinlabel {\small $\partial F$} [B] at 526 72

\pinlabel {\small $E'$} [B] at 402 79
\pinlabel {\small $D'$} [B] at 277 78
\pinlabel {\small $F'$} [B] at 344 183

\pinlabel $\gamma$ [B] at 175 35
\pinlabel $\gamma'$ [B] at 360 50
\pinlabel $\alpha$ [B] at 222 109
\pinlabel $\delta$ [B] at 420 165
\pinlabel $\pi$ [B] at 158 17
\pinlabel $\pi$ [B] at 235 125
\pinlabel $\pi$ [B] at 341 32
\pinlabel $\frac{\pi}{3}$ [B] at 420 70

\pinlabel {\Large $W$} [B] at 16 42
\pinlabel {\Large $B$} [B] at 388 44

\pinlabel (a) [B] at 110 2
\pinlabel (b) [B] at 345 2
\endlabellist
\begin{center}
\includegraphics[width=1.05\textwidth]{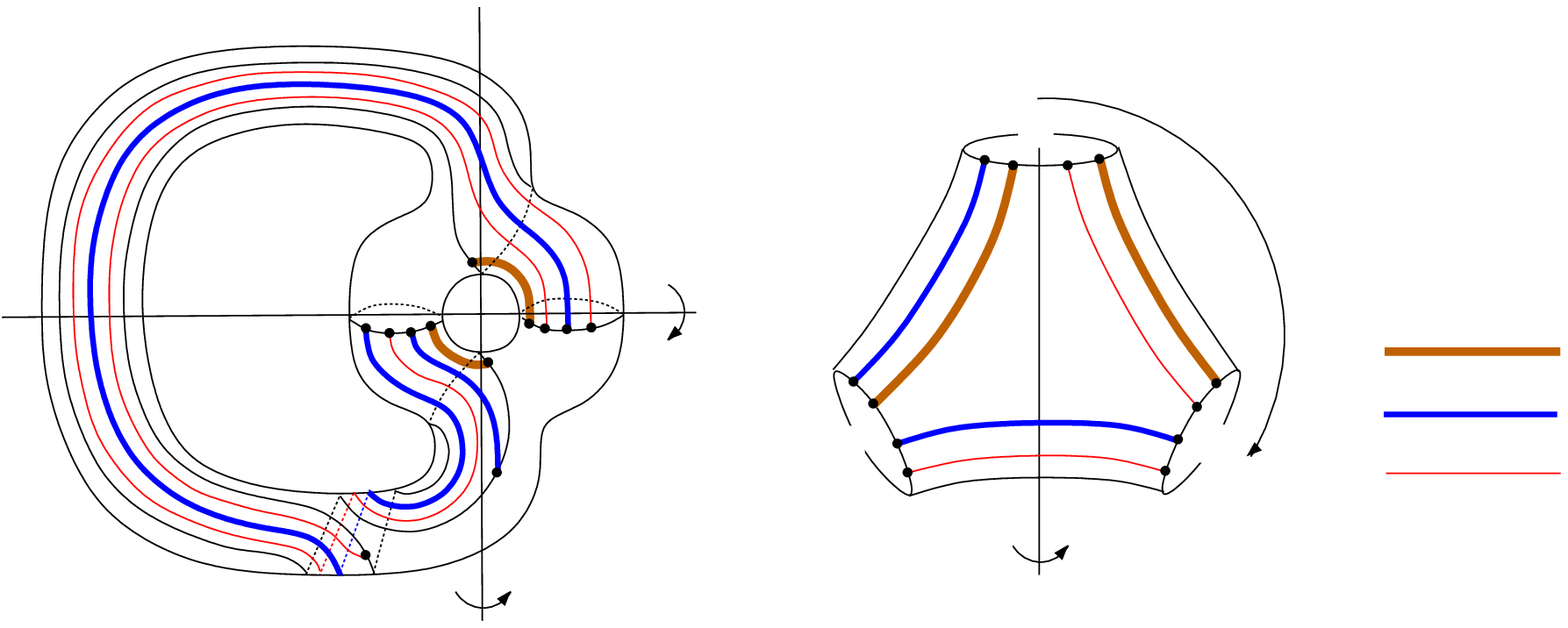}
\caption{(a) The primitive triple $\{D', E', F'\}$ of $W$, and the arcs $(\partial D \cup \partial E \cup \partial F) \cap \partial B$. (b) The $3$-ball $B$.}
\label{triple_stabilizer}
\end{center}
\end{figure}

\noindent(2) Let $\{D, E, F\}$ be a primitive triple of $V$ in $L(3, 1)$.
Then there exists a unique primitive triple $\{D', E', F'\}$ of $W$ as described in Remark \ref{remark} with Figure \ref{triple}.
Thus the stabilizer subgroup $\mathcal G_{\{D \cup E \cup F\}}$ is identified with $\mathcal G_{\{D \cup E \cup F, D' \cup E' \cup F'\}}$.
The union of three disks $D' \cup E' \cup F'$ cuts off $W$ into two $3$-balls.
One of them, say $B$, is shown in Figure \ref{triple_stabilizer} (b).
Consider the group of isotopy classes of orientation preserving homeomorphisms of $B$ which preserve each of $D' \cup E' \cup F'$ and $(\partial D \cup \partial E \cup \partial F) \cap \partial B$ on the boundary.
This group is the dihedral group $D_6 = \langle ~ \delta, \gamma' ~|~ \delta^3 = \gamma'^2 = (\gamma' \delta)^2 = 1 ~  \rangle$ of order $6$ with generators $\delta$ and $\gamma'$ in Figure \ref{triple_stabilizer} (b).
The element $\gamma$ in Figure \ref{triple_stabilizer} (a) is different from $\gamma'$, since $\gamma$ exchanges the two $3$-balls.
But they are related by $\gamma = \alpha \gamma'$, where $\alpha$ is the hyperelliptic involution exchanging the two $3$-balls as described in Figure \ref{triple_stabilizer} (a).
Thus we see that the relation $(\gamma' \delta)^2 = 1$ in $D_6$ is equivalent to $(\gamma \delta)^2 = 1$.
Since the elements $\alpha$, $\gamma$ and $\delta$ extend to elements of $\mathcal G_{\{D \cup E \cup F, D' \cup E' \cup F'\}}$, this group can be considered as the extension of $D_6$ by $\langle ~\alpha ~|~ \alpha^2 =1 ~\rangle$ with relations $\alpha \gamma \alpha = \gamma$ and $\alpha \delta \alpha = \delta$.
Thus we have the desired presentation of $\mathcal G_{\{D \cup E \cup F\}}$.
\end{proof}

Finally, the stabilizer subgroups of an edge are calculated in a similar way.

\begin{lemma}
An edge of $\mathcal T$ corresponds to the pair of end vertices.
\begin{enumerate}
\item Let $\{D, E\}$ be a primitive pair of $V$ in $L(p, 1)$.
Then $\mathcal G_{\{E, D\cup E\}} = \mathcal G_{\{E, D\}}$ has a presentation $\langle ~\alpha ~| ~\alpha^2 = 1~\rangle \oplus \langle ~\gamma ~| ~\gamma^2 = 1~\rangle$ if $p = 2$, and a presentation $\langle ~\alpha ~| ~\alpha^2 = 1~\rangle$ if $p \geq 3$.
\item Let $\{D, E, F\}$ be a primitive triple of $V$ in $L(3, 1)$.
Then $\mathcal G_{\{E, D \cup E \cup F\}} = \mathcal G_{\{E, D \cup F\}}$ has a presentation $\langle ~\alpha ~| ~\alpha^2 = 1~\rangle \oplus \langle ~\gamma ~| ~\gamma^2 = 1~\rangle$.
\end{enumerate}
\label{lem:edge_stabilizer}
\end{lemma}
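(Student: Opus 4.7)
The plan is to realize each edge stabilizer as a subgroup of the corresponding white-vertex stabilizer from Lemma \ref{lem:white_stabilizer}, by imposing the additional condition that the primitive disk $E$ is preserved individually (and hence so is its complement $D$, respectively $D \cup F$, in the pair or triple). Concretely, the white-vertex stabilizer acts by permutations on the unordered set $\{D, E\}$ in part (1), or $\{D, E, F\}$ in part (2), and the edge stabilizer is exactly the point-stabilizer of $E$ under this action.

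For part (1) with $p \geq 3$, the white-vertex stabilizer is $\langle \alpha \mid \alpha^2 = 1 \rangle \oplus \langle \sigma \mid \sigma^2 = 1\rangle$, where $\sigma$ is the involution of Figure \ref{pair_stabilizer}(c) exchanging $D$ and $E$ while $\alpha$ fixes each primitive disk individually; killing the $\sigma$ factor yields $\mathcal G_{\{E, D\}} = \langle \alpha \mid \alpha^2 = 1\rangle$. For $p = 2$, the white-vertex stabilizer is $D_8 = \langle \rho, \gamma \mid \rho^4 = \gamma^2 = (\rho\gamma)^2 = 1\rangle$, and inspection of Figure \ref{pair_stabilizer}(a)--(b) shows that $\rho$ swaps $D$ and $E$, while both $\gamma$ and $\alpha = \rho^2$ preserve each of $D$ and $E$ individually. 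Consequently the sign homomorphism $D_8 \to \mathbb Z_2$ recording the action on $\{D, E\}$ has kernel the Klein four subgroup $\langle \alpha, \gamma\rangle$; since $\alpha = \rho^2$ lies in the center of $D_8$, the inherited relations $\alpha^2 = \gamma^2 = 1$ and $[\alpha, \gamma] = 1$ give the claimed direct-sum presentation.

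For part (2), the white-vertex stabilizer is $\langle \alpha \mid \alpha^2 = 1\rangle \oplus D_6$, where $D_6 = \langle \delta, \gamma \mid \delta^3 = \gamma^2 = (\gamma\delta)^2 = 1\rangle$ permutes $\{D, E, F\}$ as the full symmetric group $S_3$ via the three-cycle $\delta$ and a transposition $\gamma$. Since the three primitive disks play symmetric roles (Remark \ref{remark} together with Figure \ref{triple_stabilizer}), we may choose $\gamma$ to be the transposition that fixes $E$ and swaps $D$ with $F$. The $S_3$-stabilizer of $E$ is then $\langle \gamma \rangle \cong \mathbb Z_2$, and combining this with the central factor $\langle \alpha \rangle$ yields $\mathcal G_{\{E, D \cup F\}} = \langle \alpha \mid \alpha^2 = 1 \rangle \oplus \langle \gamma \mid \gamma^2 = 1 \rangle$. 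The only step demanding care is the bookkeeping of which generator permutes which disk, which in each case is read directly off the figures of Lemma \ref{lem:white_stabilizer}; once that is settled the presentations follow by routine subgroup computations inside finite dihedral and abelian groups.
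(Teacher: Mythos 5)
Your proposal is correct, and it reaches the stated presentations by a legitimate route; note, though, that the paper itself offers no written proof of this lemma beyond the remark that the edge stabilizers are ``calculated in a similar way,'' meaning a direct geometric identification in the style of Lemmas \ref{lem:black_stabilizer} and \ref{lem:white_stabilizer}: cut $V$ and $W$ open along the relevant disks and identify the stabilizer with a group of isotopy classes of homeomorphisms of the resulting balls preserving the boundary data. Your approach is instead purely algebraic on top of Lemma \ref{lem:white_stabilizer}: since an element preserving both the black vertex $E$ and the white vertex preserves the complementary disk(s), the edge stabilizer is exactly the point-stabilizer of $E$ under the permutation action of the white-vertex group on the unordered pair or triple, and you then compute kernels and point-stabilizers inside $D_8$, $\mathbb Z_2 \oplus \mathbb Z_2$, and $\langle\alpha\rangle \oplus D_6$. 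This buys brevity and avoids repeating the cut-and-reassemble argument, but it shifts all the content onto correctly reading the permutation action off the figures, and that is the one step deserving emphasis: for $p = 2$ the kernel of $D_8 \to \mathbb Z_2$ could a priori be any of the three index-two subgroups $\langle\rho\rangle$, $\langle\rho^2,\gamma\rangle$, $\langle\rho^2,\rho\gamma\rangle$, so you must genuinely verify (as you assert from Figure \ref{pair_stabilizer}) that $\rho$ swaps $D$ and $E$ while $\gamma$ preserves each --- if the kernel were $\langle\rho\rangle$ the answer would be $\mathbb Z_4$, not the Klein four-group; this reading is confirmed both by the figure and by consistency with the amalgamation $\alpha=\rho^2$, $\gamma$ in Theorem \ref{thm:presentation}. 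Similarly, in part (2) your faithfulness claim for the $D_6$-action on $\{D,E,F\}$ is easily justified (the kernel is a normal subgroup of $S_3$ not containing the $3$-cycle $\delta$, hence trivial), and the symmetric roles of the three disks from Remark \ref{remark} license your choice of $\gamma$ as the transposition fixing $E$. With those readings in place, your subgroup computations are routine and correct, and the two approaches yield identical presentations.
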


Combining Lemmas \ref{lem:black_stabilizer}, \ref{lem:white_stabilizer} and \ref{lem:edge_stabilizer}, we obtain the main result.

\begin{theorem}
The genus-$2$ Goeritz group $\mathcal G$ of a lens space $L(p, 1)$, $p \geq 2$, has the following presentations.
\begin{enumerate}
\item $\langle ~\beta, \rho, \gamma~ |~ \rho^4 = \gamma^2 = (\gamma \rho)^2 = \rho^2 \beta \rho^2 \beta^{-1} = 1 ~ \rangle$ if $p = 2$,
\item $\langle ~\alpha ~ | ~ \alpha^2 = 1 ~ \rangle \oplus \langle ~\beta, \delta, \gamma~ |~ \delta^3 = \gamma^2 = (\gamma \delta)^2 = 1 ~ \rangle$ if $p = 3$, and
\item $\langle ~\alpha ~ | ~ \alpha^2 = 1 ~ \rangle \oplus \langle ~\beta, \gamma, \sigma ~ |~ \gamma^2 = \sigma^2 = 1 ~ \rangle$ if $p \geq 4$.
\end{enumerate}
\label{thm:presentation}
\end{theorem}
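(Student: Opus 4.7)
The plan is to apply Bass–Serre theory \cite{S} to the action of $\mathcal{G}$ on the tree $\mathcal{T}$ built in Section \ref{sec:primitive}. We already know that $\mathcal{T}$ is $\mathcal{G}$-invariant and that the quotient $\mathcal{T}/\mathcal{G}$ is a single edge with one black and one white end vertex. Consequently, $\mathcal{G}$ is the free product of the stabilizer subgroups of these two vertices, amalgamated along the stabilizer of the edge joining them. The three ingredients for the amalgam are exactly the presentations computed in Lemmas \ref{lem:black_stabilizer}, \ref{lem:white_stabilizer}, and \ref{lem:edge_stabilizer}, so the remaining work is purely to assemble and simplify these presentations in each of the three cases.

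For $p \geq 4$, I would observe that $\alpha$ is central in both vertex stabilizers and generates the edge stabilizer. Thus the amalgamated product splits off an $\langle \alpha \mid \alpha^2 = 1\rangle$ factor, leaving the free product $\langle \beta, \gamma \mid \gamma^2 = 1\rangle * \langle \sigma \mid \sigma^2 = 1\rangle$, which reads as $\langle \beta, \gamma, \sigma \mid \gamma^2 = \sigma^2 = 1 \rangle$, giving the presentation in (3). The case $p=3$ runs in parallel: $\alpha$ is again a central involution in both vertex stabilizers, and the edge stabilizer now contributes an additional $\gamma$ shared by both sides; factoring out $\langle \alpha \mid \alpha^2 = 1\rangle$ leaves the amalgam
\[
\langle \beta, \gamma \mid \gamma^2 = 1\rangle *_{\langle \gamma \mid \gamma^2=1\rangle} \langle \delta, \gamma \mid \delta^3 = \gamma^2 = (\gamma\delta)^2 = 1\rangle,
\]
which, by the standard recipe for amalgamated products, is just $\langle \beta, \delta, \gamma \mid \delta^3 = \gamma^2 = (\gamma \delta)^2 = 1\rangle$, yielding presentation (2).

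The case $p=2$ is the one that genuinely requires care, and will be the main obstacle. Here the black vertex stabilizer contributes $\langle \alpha\mid \alpha^2=1\rangle \oplus \langle \beta,\gamma \mid \gamma^2=1\rangle$, the white one is the dihedral group $\langle \rho,\gamma \mid \rho^4 = \gamma^2 = (\rho\gamma)^2 = 1\rangle$, and the edge stabilizer is $\langle \alpha,\gamma \mid \alpha^2=\gamma^2=1, \alpha\gamma=\gamma\alpha\rangle$. The key point (already noted in the proof of Lemma \ref{lem:white_stabilizer}(1)) is that in the white stabilizer one has $\alpha = \rho^2$. So $\alpha$ can be eliminated by this identification, and the commuting relations $\alpha\beta=\beta\alpha$ and $\alpha\gamma=\gamma\alpha$ become $\rho^2\beta\rho^{-2}\beta^{-1}=1$ and $\rho^2\gamma\rho^{-2}\gamma^{-1}=1$. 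Using $\rho^4=1$, the first becomes $\rho^2\beta\rho^2\beta^{-1}=1$. For the second I would use $(\rho\gamma)^2=1$ to deduce $\gamma\rho=\rho^{-1}\gamma$, whence $\rho^2\gamma\rho^{-2} = \gamma\rho^{-4} = \gamma$ and the relation is redundant. This yields exactly presentation (1).

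The only nonroutine verification is the redundancy calculation just described and the bookkeeping confirming that the $\gamma$ labeled in Lemma \ref{lem:black_stabilizer} matches (after conjugation if necessary) the $\gamma$ appearing in Lemma \ref{lem:white_stabilizer}; this is justified because both are identified with the involution generating the edge stabilizer provided by Lemma \ref{lem:edge_stabilizer}, so the amalgamation is unambiguous. Assembling these observations in each of the three regimes $p=2$, $p=3$, $p\geq 4$ will complete the proof of Theorem \ref{thm:presentation}.
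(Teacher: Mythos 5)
Your proposal is correct and follows exactly the paper's route: the paper proves Theorem \ref{thm:presentation} by combining Lemmas \ref{lem:black_stabilizer}, \ref{lem:white_stabilizer} and \ref{lem:edge_stabilizer} via the Bass--Serre amalgam $\mathcal G \cong \mathcal G_{\{E\}} \ast_{\mathcal G_{\{E,D\}}} \mathcal G_{\{D\cup E\}}$ (respectively with the triple stabilizer when $p=3$) coming from the single-edge quotient of $\mathcal T$. Your case-by-case simplifications --- splitting off the central $\langle\alpha\rangle$ factor for $p\geq 3$, and for $p=2$ eliminating $\alpha=\rho^2$ so that $[\alpha,\beta]=1$ becomes $\rho^2\beta\rho^2\beta^{-1}=1$ while $[\rho^2,\gamma]=1$ is redundant from $(\rho\gamma)^2=1$ --- are precisely the computations the paper leaves implicit, and they check out.
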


\smallskip
\noindent {\bf Acknowledgments.}
The author wish to express his gratitude to Darryl McCullough and Yuya Koda for helpful discussions for their valuable advice and comments.

\bibliographystyle{amsplain}

\end{document}